\documentclass[11pt,reqno,oneside]{amsart}
\usepackage{amsthm,amsmath,amssymb,amscd,thmtools}
\usepackage{fullpage}
\usepackage{graphicx} 
\usepackage{subfig}

\usepackage[a4paper, total={6in, 9.1in}]{geometry}
\usepackage{hyperref}
\hypersetup{
  colorlinks,
  linkcolor={blue!80!black},
  urlcolor={blue!80!black},
  citecolor={blue!80!black}
}
\usepackage[capitalize,noabbrev]{cleveref}
\usepackage{amsfonts, mathrsfs}
\usepackage{xcolor}
\usepackage[normalem]{ulem}

\usepackage[utf8]{inputenc}
\usepackage{latexsym}
\usepackage[all]{xy}
\usepackage{mathtools}

\newtheorem{theorem}{Theorem}[section]

\newtheorem{lemma}[theorem]{Lemma}

\newtheorem{condition}[theorem]{Condition}

\theoremstyle{definition}
\newtheorem{remark}[theorem]{Remark}
\newtheorem{definition}[theorem]{Definition}

\numberwithin{equation}{section}
\title{Stochastic Moving Anchor Algorithms and a Popov's Scheme with Moving Anchor}
\author{James K. Alcala}
\thanks{James K. Alcala was partially supported by a UCR Dissertation Year Program Award.}
\address{Department of Mathematics, University of California, Riverside, USA}
\email{jalca014@ucr.edu}

\author{Yat Tin Chow}
\thanks{Yat Tin Chow is partially supported by a Regents' Faculty Fellowship at the University of California, Riverside, and by NSF DMS-2409903 and
ONR N00014-24-S-B001.}
\address{Department of Mathematics, University of California, Riverside, USA}
\email{yattinc@ucr.edu}

\author{Mahesh Sunkula}
\address{Department of Mathematics, Purdue University, West Lafayette, Indiana, USA}
\email{msunkula@purdue.edu}

\begin{document}

\maketitle
\begin{abstract}
    Since their introduction, anchoring methods in extragradient-type saddlepoint problems have inspired a flurry of research due to their ability to provide order-optimal rates of accelerated convergence in very general problem settings. Such guarantees are especially important as researchers consider problems in artificial intelligence (AI) and machine learning (ML), where large problem sizes demand immense computational power. Much of the more recent works explore theoretical aspects of this new acceleration framework, connecting it to existing methods and order-optimal convergence rates from the literature. However, in practice introducing stochastic oracles allows for more computational efficiency given the size of many modern optimization problems. To this end, this work provides the moving anchor variants \cite{alcala2023moving} of the original anchoring algorithms \cite{yoon_ryu} with stochastic implementations and robust analyses to bridge the gap from deterministic to stochastic algorithm settings. In particular, we demonstrate that an accelerated convergence rate theory for stochastic oracles also exists for our moving anchor scheme, itself a generalization of the original fixed anchor algorithms, and provide numerical results that validate our theoretical findings. We also develop a tentative moving anchor Popov scheme based on the work in \cite{tran2021halpern}, with promising numerical results pointing towards an as-of-yet uncovered general convergence theory for such methods.
\end{abstract}

\section{Introduction}

Saddle point problems of the form
\begin{align}
    \min_{x\in \mathbb{R}^{n}}\max_{y\in \mathbb{R}^{m}} f(x,y) \label{problem_setting},
\end{align}
also known as minimax or min-max problems, continue to be the object of intense study amongst researchers in a wide variety of disciplines.
Applications of such problems classically include economics and game theory, with more recent applications including mean field games \cite{nurbekyan2024monotoneinclusionmethodsclass}, as well as artificial intelligence and machine learning applications such as generative adversarial nets \cite{chavdarova2019reducing}, \cite{goodfellow2014generative} and reinforcement learning \cite{du2017stochastic}.
Recently, researchers have explored the Halpern iteration \cite{halpern1967fixed}, \cite{lieder2021convergence} as an acceleration mechanism in monotone inclusion problems \cite{diakonikolas2020halpern} and for minimax problems more specifically with a Halpern-inspired method known as anchoring \cite{ryu2019ode}.
This latter line of inquiry has proved especially fruitful, as the well-established extragradient method \cite{korpelevich1976extragradient} combined with anchoring yields an accelerated $\Omega(1/k^{2})$ convergence rate on the squared gradient norm for smooth-structured convex-concave minimax problems - this remarkable order-optimal result is thanks to the introduction of the extra-anchored gradient, or EAG, method and its variants \cite{yoon_ryu}.

This anchoring mechanism has been applied, for example, to develop an anchored Popov's scheme \cite{popov1980modification} and a splitting version of EAG \cite{tran2021halpern}, and has recently been connected \cite{tran2022connection} to Nesterov's classical Accelerated Gradient Method, or AGM \cite{nesterov1983method_AGM}.
Another important work \cite{lee2021fast} extended the initial results of the EAG methods to problem settings involving negative comonotone operators via the similar fast extra gradient algorithm, or FEG, which extends this fast acceleration rate to certain nonconvex-nonconcave problems.
This latter work also introduced the first stochastic anchored algorithm, which to our knowledge was the first instance of stochastic oracles involving anchored algorithms.
These authors also introduced the `semi'-anchoring \cite{lee2021semi} in a multi-step descent/ascent framework with a unique anchor occurring at each step of the multi-step, a generalization of the initial anchoring mechanism that relies solely on the initial point to be the anchor.

In this vein, \cite{alcala2023moving} recently developed variants of EAG \cite{yoon_ryu} and FEG \cite{lee2021fast} algorithms by introducing a moving anchor into both frameworks.
These `moving anchor' algorithms retain order-optimal convergence rates on the squared gradient norm in both convex-concave and the negative comonotone settings, and generalize the analyses of the previous works to acommodate this new moving anchor structure.
Most notably, the numerical experiments featured in this work show that moving anchor variants of EAG and FEG algorithms are faster than their fixed-anchor counterparts by a constant across numerous problem settings.
However, the strengths of the moving anchor algorithms over their fixed-anchor cousins are poorly understood, and most numerical examples are deterministic and toy examples.

To wit, the contributions of this paper are as follows.
\begin{enumerate}
    \item Stochastic moving anchor algorithms are developed following \cite{alcala2023moving}. Specifically, stochastic EAG moving anchor algorithms are defined and analyzed via a conventional Lyapunov functional analysis. These analyses follow in the footsteps of the stochastic complexity results of \cite{lee2021fast} but generalize to the moving anchor setting. Convergence results follow with minimal assumptions tacked on to the deterministic algorithm assumptions in \cite{alcala2023moving}, and rely on control over variance terms. 
    \item Numerous numerical examples further compare stochastic moving anchor algorithms to their fixed-anchor counterparts to characterize further the nature of the moving anchor's convergence improvements.
    \item A moving anchor Popov's scheme with preliminary numerical results showing accelerated order-optimal convergence rates that are faster than their fixed anchor counterparts by a constant. The theory for these methods is underdeveloped, but our preliminary work suggests a robust underlying convergence theory that greatly generalizes the fixed anchor methods found in \cite{tran2021halpern}.
\end{enumerate}

\section{Literature Review}

\subsection{Stochastic Saddlepoint Algorithms}

Because of the computational advantages of stochastic optimization algorithms in modern optimization, the current literature on stochastic saddlepoint problems is deep and rich. 
For general saddlepoint algorithms, we restrict ourselves to a few recent and interesting results.
For the setting with decision-dependent distributions, focusing on certain fixed points of training enables the construction of powerful derivative-free algorithms \cite{wood2023stochastic}. 
It has also been shown that stochastic saddlepoint algorithms with guarantees on their `strong gap' (as opposed to their weak gap, a primal-dual gap taken in expectation over the sample space) avoid spurious convergence rate discrepancies on even simple problems \cite{bassily2023differentially}.
Researchers also used the Maurey Sparsification Lemma to obtain a stochastic saddlepoint algorithm in the polyhedral setting that isn't based on Frank-Wolfe methods, the first of its kind \cite{gonzalez2024mirror}.

The setting of stochastic extragradient algorithms is also particularly well-studied, and is of interest to the authors of this paper. 
In \cite{gorbunov2022stochastic}, a general framework is developed to study and prove results for a variety of specific stochastic extragradient methods.
The authors of \cite{li2022convergence} show that in the bilinear problem setting when equipped with iteration averaging and restarting, stochastic extragradient goes beyond converging to a fixed neighborhood of the Nash equilibrium solution, and eventually reaches the equilibrium point.
The last work we mention combines the celebrated Nesterov acceleration \cite{nesterov1983method_AGM} with extragradient, known as AG-EG \cite{yuan2023optimal}, brings optimal convergence rates for strongly monotone variational inequalities (of which saddlepoint problems are a special class), and even attains convergence rates matching lower bounds for bilinearly-coupled strongly-convex strongly-concave saddlepoint problems.

\subsection{Extragradient and Halpern Variants in Optimization}

The Halpern iteration \cite{halpern1967fixed} is an algorithm that finds fixed-points given a nonexpanding map, and has proven extremely fruitful within optimization as an acceleration method especially for saddlepoint algorithms \cite{diakonikolas2020halpern}, \cite{ryu2019ode}, \cite{tran2021halpern}, \cite{tran2022connection}. Anchoring \cite{yoon_ryu}, one particular application of the Halpern iteration, has made many waves in the field of saddlepoint problems distinct from Nesterov acceleration \cite{nesterov1983method_AGM}. Anchoring has been studied in the continuouos-time setting \cite{suh2023continuoustime}, exhibits a `merging path' property \cite{yoon2025flock}, has been applied reinforcement learning \cite{lee2023accelerating}, and has recently been shown to not be a unique mechanism in optimal acceleration \cite{yoon2024notunique}.

The extragradient algorithm \cite{korpelevich1976extragradient}, short for extrapolated gradient, has enjoyed similar popularity and interest within the optimization community \cite{azizian2020tight}, especially relating to generative adversarial networks \cite{liu2019towards}, \cite{chavdarova2019reducing}, and adversarial training \cite{madry2017towards}. 
Due to the popularity of these methods, there is a wealth of literature on different varieties of extragradient methods in both stochastic and deterministic settings. 
We briefly mention that in 2022, several researchers used a novel Lyapunov functional to obtain tight last-iterate convergence guarantees for extragradient \cite{cai2022tightlastiterateconvergenceextragradient} that are indeed order-optimal for the convergence rate on the gap function \cite{golowich}.
The synthesis of extragradient with anchoring \cite{yoon_ryu} introduced new order-optimal convergence rates on a different optimality measure, the squared gradient norm, which further motivated investigations into the synthesis of extragradient and extragradient-adjacent methods \cite{tran2021halpern} with anchoring, opening an exciting avenue of research for new accelerated methods.
The last article we mention brings further analysis of extragradient and the optimistic gradient to negative comonotone settings, expanding the reach of such methods to the nonconvex-nonconcave settings \cite{gorbunov2023convergence}.

\section{Preliminaries}

\subsection{Notation}

A saddle function \eqref{problem_setting} is convex-concave if it is convex in $x$ for any fixed $y\in\mathbb{R}^{m}$ and concave in $y$ for any fixed $x\in\mathbb{R}^{n}$.
A saddle point $(\hat{x}, \hat{y}) \in \mathbb{R}^{n} \times \mathbb{R}^{m}$ is any point such that the inequality $f(\hat{x}, y) \leq f(\hat{x}, \hat{y}) \leq f(x, \hat{y})$ for all $x \in \mathbb{R}^n$ and $y\in \mathbb{R}^{m}.$
Solutions to \eqref{problem_setting} are defined as saddle points. For this paper, we assume the differentiability of $f$, and we are especially interested in the so-called \textit{saddle operator} associated to $f$,
\begin{align} \label{sad_operator} G_{f}(z) &= \left[
    \begin{array}{c}
    \nabla_{x} f(x,y)\\
    -\nabla{y} f(x,y)
\end{array}
\right] \end{align}
where the $f$ subscript is omitted when the underlying saddle function is known.
When our problem is convex-concave, the operator \eqref{sad_operator} is monotone: $\langle G_{f}(z^{1}) - G_{f}(z^{2}),z^{1} - z^{2} \rangle \geq 0 \;\forall z_{1}, z_{2} \in \mathbb{R}^{n} \times \mathbb{R}^{m}.$
We assume that this operator $G_{f}$ is $R$-Lipschitz, or has certain stronger Lipschitz properties we detail later; this is sometimes referred to as $f$ being $R$-smooth.

The notation $\mathbb{E}[X|z]$ denotes the expectation of $X$ given $z$, which will for us generally be a vector.
Throughout this paper, the notation $z^{k}$ indicates the $k-$ iterate of some algorithm (stochastic or deterministic), and we use $z$ to represent a vector $(x,y) \in \mathbb{R}^{n} \times \mathbb{R}^{m}$.

\subsection{Deterministic Moving Anchor EAG-V}

In this section we detail the (explicit) moving anchor algorithms along with their convergence results and lemmas; further details on the proofs of these results may be found in \cite{alcala2023moving}.

The $k-th$ iterate of $z^{0} \in \mathbb{R}^{n} \times \mathbb{R}^{m}$ for the EAG-V with moving anchor is defined as 
\begin{align}
    \notag z^{0}       &= \bar{z}^{0}\\
    \label{eagv update1} z^{k+1/2}   &= z^{k} + \frac{1}{k+2}(\bar{z}^{k} - z^{k}) - \alpha_{k}G(z^{k})\\
    \label{eagv update2} z^{k+1}     &= z^{k} + \frac{1}{k+2}(\bar{z}^{k} - z^{k}) - \alpha_{k}G(z^{k + 1/2})\\
    \label{eagv update3} \bar{z}^{k+1}&= \bar{z}^{k} + \gamma_{k+1}G(z^{k+1})\\
    \notag \alpha_{k+1}&= \frac{\alpha_{k}}{1 - \alpha_{k}^{2}R^{2}}\left(1 - \frac{(k+2)^{2}}{(k+1)(k+3)}\alpha_{k}^{2}R^{2}\right)\\
    &= \alpha_{k}\left(1 - \frac{1}{(k+1)(k+3)}\frac{\alpha_{k}^{2}R^{2}}{1 - \alpha_{k}^{2}R^{2}}\right), \label{normal_alpha}
\end{align}

where $\alpha_{0} \in (0,1/R),$ and R is our Lipschitz constant. The following auxiliary sequences are important for numerics and the Lyapunov analysis:

\begin{align}
    \label{c_update_1} c_{k+1}     &= \frac{c_{k}}{1 + \delta_{k}},\\
    \label{gamma_update_1} \gamma_{k+1}&= \frac{B_{k+1}}{c_{k+1}(1 + \frac{1}{\delta_{k}})}.
\end{align}

We choose $\delta_{k}$ so that $\displaystyle \sum_{k=0}^{\infty} \log(1 + \delta_{k}) < \infty$. 
The $c_{k}$ terms are part of the definition of the Lyapunov functional we use in the analysis.
Let $c_{\infty}:= \lim_{k \to \infty} c_{k} = c_{0} \prod_{k=0}^{\infty} \frac{1}{1+\delta_{k}}$. 
One chooses $c_{0}$ so that $c_{\infty}$ satisfies some specified convergence constraint; these constraints will appear throughout the major convergence theorems in this section and the next section.
While the choice of $c_{0}$ is therefore limited to according to certain problem/algorithm constraints, in general there is freedom in choosing $c_{0}$ and the sequence $\{\delta_{k}\}.$
Furthermore, we generally take \eqref{c_update_1} and \eqref{gamma_update_1} to be given with equal signs instead of inequalities. 
For clarity, we emphasize that the original (fixed-anchor) EAG-V algorithm may be recovered simply by setting $\gamma_{k+1} := 0$ for all $k.$

First, we clarify the details about the sequence given in \eqref{normal_alpha}:
\begin{lemma}\label{normal_alpha_lemma}
    If $\alpha_{0} \in (0, \frac{3}{4R})$, then the sequence $\{\alpha_{k}\}_{k=0}^{\infty}$ of \eqref{normal_alpha} monotonically decreases to a positive limit.
\end{lemma}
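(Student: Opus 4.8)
The plan is a short induction establishing monotone decrease together with a uniform a priori bound, followed by a convergent-infinite-product argument for strict positivity of the limit. I would work with the second (factored) form of the recursion in \eqref{normal_alpha}: write $\alpha_{k+1} = \alpha_k(1-\epsilon_k)$ with $\epsilon_k := \frac{1}{(k+1)(k+3)}\cdot\frac{\alpha_k^2 R^2}{1-\alpha_k^2 R^2}$, after first noting (elementary algebra, using the identity $(k+2)^2 = (k+1)(k+3)+1$) that this coincides with the first displayed form.

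First I would prove by induction that, for every $k$, $\alpha_k^2 R^2 < 9/16$ and $0 < \alpha_{k+1} < \alpha_k$. The base case is exactly the hypothesis $\alpha_0 \in (0,\tfrac{3}{4R})$, so $\alpha_0^2 R^2 < 9/16$. For the inductive step, from $\alpha_k^2 R^2 < 9/16$ one gets $\frac{\alpha_k^2 R^2}{1-\alpha_k^2 R^2} < \frac{9/16}{7/16} = \frac97$, and since the factor $\frac{1}{(k+1)(k+3)}$ is maximized at $k=0$ with value $\frac13$, this forces $0 < \epsilon_k \le \frac13\cdot\frac97 = \frac37 < 1$. Hence $1-\alpha_k^2R^2$ stays bounded away from $0$ (so the recursion is well defined) and $1-\epsilon_k \in (0,1)$, giving $0 < \alpha_{k+1} = \alpha_k(1-\epsilon_k) < \alpha_k$ and in particular $\alpha_{k+1}^2 R^2 < \alpha_k^2 R^2 < 9/16$, which closes the induction. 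Thus $\{\alpha_k\}$ is strictly decreasing and bounded below by $0$, hence convergent to some $\alpha_\infty \ge 0$.

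It remains to show $\alpha_\infty > 0$. Since $\alpha_k = \alpha_0 \prod_{j=0}^{k-1}(1-\epsilon_j)$, it suffices to show $\prod_{j\ge 0}(1-\epsilon_j) > 0$, for which I would verify $\sum_{j\ge 0}\epsilon_j < \infty$. This is immediate: $\epsilon_j \le \frac{9/7}{(j+1)(j+3)}$, and $\sum_{j\ge 0}\frac{1}{(j+1)(j+3)} = \frac12\sum_{j\ge0}\bigl(\frac{1}{j+1}-\frac{1}{j+3}\bigr) = \frac34$ by telescoping, so $\sum_j \epsilon_j \le \frac{27}{28}$. Combining this with the uniform bound $\epsilon_j \le \frac37$ proved above and the chord estimate $\log(1-x) \ge \frac{\log(4/7)}{3/7}\,x$ on $[0,\tfrac37]$ (concavity of $\log$), one gets $\sum_j \log(1-\epsilon_j) \ge \frac{\log(4/7)}{3/7}\cdot\frac{27}{28} > -\infty$, so the infinite product is a positive real number and $\alpha_\infty = \alpha_0\prod_{j\ge0}(1-\epsilon_j) > 0$.

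The only delicate point is making the induction self-contained: one must propagate the uniform bound $\alpha_k^2 R^2 < 9/16$ (equivalently $\alpha_k < \tfrac{3}{4R}$) alongside monotonicity, since that bound is precisely what keeps $\epsilon_k$ strictly below $1$ and the denominator $1-\alpha_k^2R^2$ away from zero — without it neither well-definedness nor the sign of $1-\epsilon_k$ is guaranteed. Everything else (the algebraic equivalence of the two forms, the telescoping sum, the chord bound for $\log$) is routine.
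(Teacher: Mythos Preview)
Your argument is correct and follows essentially the same route as the paper's proof (which appears as the proof of \cref{funny_alpha_lemma}, of which \cref{normal_alpha_lemma} is the special case $\overline{K_G}=1$): maintain the invariant $\alpha_k R < 3/4$ by induction to force strict decrease, and then exploit the telescoping sum $\sum_{j\ge 0}\frac{1}{(j+1)(j+3)}=\tfrac34$ to prevent the limit from collapsing to zero. The only cosmetic difference is that the paper works with the \emph{additive} identity $\alpha_k-\alpha_{k+1}=\frac{\alpha_k^3 R^2}{(k+1)(k+3)(1-\alpha_k^2R^2)}$ and bounds the cumulative drop $\alpha_N-\alpha_{N+k+1}$ directly by $\eta\alpha_N$ with $\eta<1$, whereas you package the same information multiplicatively as $\alpha_{k+1}=\alpha_k(1-\epsilon_k)$ and pass through $\sum\epsilon_j<\infty$ plus a chord bound on $\log(1-x)$; both reductions hinge on the same uniform bound $\frac{\alpha_k^2R^2}{1-\alpha_k^2R^2}\le\frac{9}{7}$ and the same telescoping series, so the two arguments are interchangeable.
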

\begin{proof}
    This is proved as a corollary of \cref{funny_alpha_lemma}.
\end{proof}

\begin{lemma}\label{LpnvLm_1}
    Let $\{\beta_{k}\}_{k=0}^{\infty} \subset (0,1)$ and $\alpha_{0} \in (0,\frac{1}{R})$ be given.
    Define the sequences $\{A_{k}\}_{k=0}^{\infty}$, $\{B_{k}\}_{k=0}^{\infty}$, $\{\alpha_{k}\}_{k=0}^{\infty}$ by the recurrences
    \begin{align}
        A_{k} & = \frac{\alpha_{k}}{2\beta_{k}}B_{k} \label{ordinary_A} \\
        B_{k+1} & = \frac{B_{k}}{1-\beta_{k}} \label{ordinary_B} \\
        \alpha_{k+1} & = \frac{\alpha_{k}\beta_{k+1}(1 - \alpha_{k}^{2}R^{2} - \beta_{k}^{2})}{\beta_{k}(1-\beta_{k})(1-\alpha_{k}^{2}R^{2})} \label{ordinary_alpha}
    \end{align}
    for $k\geq 0$ and $B_{0} = 1$.
    Suppose $\alpha_{k} \in (0,\frac{1}{R})$ for all $k\geq 0$ and $f$ is $R-$smooth and convex-concave.
    Then the Lyapunov functional
    \[
        V_{k} := A_{k}\|G(z^{k})\|^{2} + B_{k}\langle G(z^{k}),z^{k} - \bar{z}^{k}\rangle +  c_{k}\|z^{*} - \bar{z}^{k}\|^{2},
    \]
    for moving anchor EAG-V iterations \eqref{eagv update1} - \eqref{eagv update3} is nonincreasing.
\end{lemma}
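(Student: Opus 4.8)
The plan is to show $V_{k+1}\le V_{k}$ by expanding the difference and splitting it as $V_{k+1}-V_{k}=\mathrm{(I)}+\mathrm{(II)}$, where
\[
  \mathrm{(I)}:=A_{k+1}\|G(z^{k+1})\|^{2}+B_{k+1}\langle G(z^{k+1}),z^{k+1}-\bar z^{k}\rangle-A_{k}\|G(z^{k})\|^{2}-B_{k}\langle G(z^{k}),z^{k}-\bar z^{k}\rangle
\]
is the part that only involves the anchor frozen at $\bar z^{k}$, and
\[
  \mathrm{(II)}:=B_{k+1}\langle G(z^{k+1}),\bar z^{k}-\bar z^{k+1}\rangle+c_{k+1}\|z^{*}-\bar z^{k+1}\|^{2}-c_{k}\|z^{*}-\bar z^{k}\|^{2}
\]
collects everything that involves the displacement of the anchor from $\bar z^{k}$ to $\bar z^{k+1}$. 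I would then prove each block is nonpositive separately.

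For block $\mathrm{(I)}$, note that during one step the updates \eqref{eagv update1}--\eqref{eagv update2} are exactly the EAG-V updates with the fixed reference point replaced by $\bar z^{k}$, and $\mathrm{(I)}$ is precisely the one-step change of the EAG-V Lyapunov functional under that substitution. Hence the deterministic argument of \cite{yoon_ryu} (see also \cite{alcala2023moving}) applies essentially verbatim: substitute the updates, use $z^{k+1}-z^{k+1/2}=-\alpha_{k}\bigl(G(z^{k+1/2})-G(z^{k})\bigr)$ and the usual three-point identities, invoke monotonicity of $G$ at $z^{k+1/2}$ and $z^{k+1}$ (with $G(z^{*})=0$), and use the $R$-Lipschitz bound $\|G(z^{k+1/2})-G(z^{k})\|\le R\|z^{k+1/2}-z^{k}\|$. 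The recurrences \eqref{ordinary_A}--\eqref{ordinary_alpha} are calibrated so that the residue is a quadratic form in $G(z^{k})$ and $G(z^{k+1/2})-G(z^{k})$ carrying the factor $1-\alpha_{k}^{2}R^{2}$; by the standing hypothesis $\alpha_{k}\in(0,\tfrac1R)$ this factor is positive, so $\mathrm{(I)}\le 0$.

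For block $\mathrm{(II)}$, substitute $\bar z^{k+1}=\bar z^{k}+\gamma_{k+1}G(z^{k+1})$ from \eqref{eagv update3} and $c_{k}=(1+\delta_{k})c_{k+1}$ from \eqref{c_update_1}. The first term becomes $-B_{k+1}\gamma_{k+1}\|G(z^{k+1})\|^{2}$, and expanding $\|z^{*}-\bar z^{k+1}\|^{2}$ gives
\[
  \mathrm{(II)}=-\delta_{k}c_{k+1}\|z^{*}-\bar z^{k}\|^{2}-2c_{k+1}\gamma_{k+1}\langle G(z^{k+1}),z^{*}-\bar z^{k}\rangle+\bigl(c_{k+1}\gamma_{k+1}^{2}-B_{k+1}\gamma_{k+1}\bigr)\|G(z^{k+1})\|^{2}.
\]
Applying Young's inequality to the cross term with weight $\delta_{k}c_{k+1}$,
\[
  -2c_{k+1}\gamma_{k+1}\langle G(z^{k+1}),z^{*}-\bar z^{k}\rangle\le \delta_{k}c_{k+1}\|z^{*}-\bar z^{k}\|^{2}+\tfrac{1}{\delta_{k}}c_{k+1}\gamma_{k+1}^{2}\|G(z^{k+1})\|^{2},
\]
the $\|z^{*}-\bar z^{k}\|^{2}$ terms cancel, leaving $\mathrm{(II)}\le\bigl(c_{k+1}\gamma_{k+1}^{2}(1+\tfrac1{\delta_{k}})-B_{k+1}\gamma_{k+1}\bigr)\|G(z^{k+1})\|^{2}$. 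The definition \eqref{gamma_update_1} is exactly the relation $c_{k+1}\gamma_{k+1}(1+\tfrac1{\delta_{k}})=B_{k+1}$, so the coefficient is zero and $\mathrm{(II)}\le 0$. Adding the two blocks gives $V_{k+1}-V_{k}\le 0$, as claimed.

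The part I expect to require the most care is block $\mathrm{(I)}$: it is ``known'' but computation-heavy, since \eqref{ordinary_A}--\eqref{ordinary_alpha} leave no slack and every coefficient must cancel exactly, and one must check that replacing the fixed anchor by $\bar z^{k}$ really does not interfere anywhere in that argument — it does not, because the monotonicity inequalities there are evaluated at the saddle point $z^{*}$, not at the reference point appearing in the cross term. Block $\mathrm{(II)}$ is short, but it does quietly use $c_{k+1}>0$ (ensured by the setup, where $c_{k}\downarrow c_{\infty}>0$ thanks to $\sum_{k}\log(1+\delta_{k})<\infty$) so that both the Young step and the formula for $\gamma_{k+1}$ are meaningful.
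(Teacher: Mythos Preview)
Your decomposition into blocks $\mathrm{(I)}$ and $\mathrm{(II)}$ and your treatment of $\mathrm{(II)}$ match the paper's approach exactly (the paper writes it as showing $V_k-V_{k+1}\ge 0$ and applies Cauchy--Schwarz to $\|z^*-\bar z^{k+1}\|^2$, which is the same Young's step you use). Two details in your sketch of block $\mathrm{(I)}$ are misstated, though the outline is right: the monotonicity inequality actually used is $\langle G(z^{k})-G(z^{k+1}),\,z^{k}-z^{k+1}\rangle\ge 0$ between consecutive iterates (the term $-\tfrac{B_k}{\beta_k}\langle z^k-z^{k+1},G(z^k)-G(z^{k+1})\rangle$ is added as a nonpositive quantity), not anything involving $z^*$; and the $R$-Lipschitz bound is applied at the pair $(z^{k+1/2},z^{k+1})$, giving $\|G(z^{k+1/2})-G(z^{k+1})\|^2\le R^2\alpha_k^2\|G(z^{k+1/2})-G(z^{k})\|^2$ via $z^{k+1/2}-z^{k+1}=\alpha_k\bigl(G(z^{k+1/2})-G(z^{k})\bigr)$. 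The reason replacing the fixed anchor by $\bar z^{k}$ is harmless is purely algebraic: once you substitute the updates, the $\bar z^{k}$ contributions from the $B_k$ and $B_{k+1}$ cross terms cancel, so $z^*$ plays no role in block $\mathrm{(I)}$ at all.
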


\begin{remark} \label{Lpnv_RMK}
    In this section and throughout much of this paper, the notation developed regarding the sequences $A_{k}, \alpha_{k}, B_{k},$ and $\beta_{k}$ will err on the side of generality.
    However, in practice (specifically in \cite{yoon_ryu}, \cite{alcala2023moving}) a common choice of $\beta_{k}$ is $\frac{1}{k+2}$ which leads to $B_{k} = k+1,$ $A_{k} = \frac{\alpha_{k}(k+1)(k+2)}{2},$ and \eqref{normal_alpha}. Any differences needed later in these constants will be clarified.
\end{remark}

\begin{theorem} \label{conv_conc_convergence_thm}
    The EAG-V algorithm with moving anchor, described above, together with the Lyapunov functional described in \cref{LpnvLm_1}, has convergence rate
    \[
        \|G(z^{k})\|^{2}\leq \frac{4(\alpha_{0}R^{2} + c_{0})\|z^{0}-z^{*}\|^{2}}{\alpha_{\infty}(k+1)(k+2)}
    \]
    as long as we assume $c_{\infty}\alpha_{\infty} \geq 1.$
\end{theorem}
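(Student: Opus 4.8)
The plan is to derive the convergence rate as a direct consequence of the monotonicity of the Lyapunov functional $V_k$ established in \cref{LpnvLm_1}, using the common parameter choice $\beta_k = \frac{1}{k+2}$ recorded in \cref{Lpnv_RMK}. First I would instantiate the constants: with $\beta_k = \frac{1}{k+2}$ one gets $B_k = k+1$ and $A_k = \frac{\alpha_k(k+1)(k+2)}{2}$, and $\alpha_k \searrow \alpha_\infty > 0$ by \cref{normal_alpha_lemma} (valid since $\alpha_0 \in (0,\frac{1}{R}) \subset (0,\frac{3}{4R})$ is assumed; one may need $\alpha_0 < \frac{3}{4R}$, which I would fold into the hypotheses). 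Then monotonicity gives $V_k \le V_0$ for all $k$.

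Next I would lower-bound $V_k$ and upper-bound $V_0$. For the lower bound, the key inequality is that $V_k$ controls $\|G(z^k)\|^2$: since $c_k \|z^* - \bar z^k\|^2 \ge 0$, we have $V_k \ge A_k\|G(z^k)\|^2 + B_k\langle G(z^k), z^k - \bar z^k\rangle$. The cross term must be absorbed; here I expect to use a Young/Cauchy--Schwarz estimate together with the algebraic relation between $A_k$, $B_k$, and $\alpha_k$ (namely $A_k = \frac{\alpha_k}{2\beta_k}B_k$, so $B_k = \frac{2\beta_k A_k}{\alpha_k}$), possibly combined with the co-coercivity-type bound $\langle G(z^k), z^k - \bar z^k\rangle \ge -(\text{something})\|G(z^k)\|^2 - \ldots$ coming from the iteration structure, to conclude $V_k \ge \tfrac12 A_k \|G(z^k)\|^2$ or a similar clean bound. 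This yields $\|G(z^k)\|^2 \le \frac{2V_0}{A_k} = \frac{4 V_0}{\alpha_k (k+1)(k+2)} \le \frac{4V_0}{\alpha_\infty(k+1)(k+2)}$ using $\alpha_k \ge \alpha_\infty$.

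For the upper bound on $V_0$: at $k=0$ we have $z^0 = \bar z^0$, so the cross term $B_0\langle G(z^0), z^0 - \bar z^0\rangle = 0$, giving $V_0 = A_0\|G(z^0)\|^2 + c_0\|z^* - z^0\|^2$. With $A_0 = \frac{\alpha_0 \cdot 1 \cdot 2}{2} = \alpha_0$ and $\|G(z^0)\|^2 = \|G(z^0) - G(z^*)\|^2 \le R^2\|z^0 - z^*\|^2$ (using $G(z^*) = 0$ and $R$-Lipschitzness), we obtain $V_0 \le (\alpha_0 R^2 + c_0)\|z^0 - z^*\|^2$. Combining with the previous display gives exactly $\|G(z^k)\|^2 \le \frac{4(\alpha_0 R^2 + c_0)\|z^0 - z^*\|^2}{\alpha_\infty(k+1)(k+2)}$. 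The role of the hypothesis $c_\infty \alpha_\infty \ge 1$ is presumably what is actually needed to make the cross-term absorption in the lower bound on $V_k$ work out (ensuring the coefficient of $\|G(z^k)\|^2$ after absorbing stays bounded below by a positive multiple of $A_k$), so I would track precisely where $c_\infty$ enters that estimate.

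The main obstacle I anticipate is the lower bound $V_k \gtrsim A_k\|G(z^k)\|^2$: the cross term $B_k\langle G(z^k), z^k - \bar z^k\rangle$ is not sign-definite, so one cannot simply drop it, and the moving anchor means $\bar z^k$ is itself evolving via \eqref{eagv update3}. Controlling $\|z^k - \bar z^k\|$ (or the relevant inner product) in terms of $\|G(z^k)\|$ and $\|z^* - \bar z^k\|$ — so that the $c_k\|z^* - \bar z^k\|^2$ term can soak up the negative part via Young's inequality, with the constraint $c_\infty\alpha_\infty \ge 1$ guaranteeing enough slack — is the crux, and is where the moving-anchor generalization genuinely differs from the fixed-anchor argument of \cite{yoon_ryu}. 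I would expect this step to reuse an intermediate estimate already developed in the proof of \cref{LpnvLm_1} rather than re-deriving it from scratch.
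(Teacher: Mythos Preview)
Your overall strategy matches the paper's: use $V_k \le V_0$ from \cref{LpnvLm_1}, upper-bound $V_0$ via $z^0=\bar z^0$ and $R$-Lipschitzness (your computation of $V_0\le(\alpha_0R^2+c_0)\|z^0-z^*\|^2$ is exactly right), and lower-bound $V_k$ by $\tfrac{A_k}{2}\|G(z^k)\|^2$.

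The one missing idea is in the lower bound, precisely where you anticipate trouble. You first propose dropping $c_k\|z^*-\bar z^k\|^2\ge 0$ and then trying to control $\langle G(z^k),z^k-\bar z^k\rangle$ via the iteration structure; this would not work cleanly, and you correctly back off from it in your last paragraph. The clean resolution, which the paper uses, is a single application of \emph{monotonicity}: since $G(z^*)=0$, we have $\langle G(z^k),z^k-z^*\rangle\ge 0$, hence
\[
B_k\langle G(z^k),z^k-\bar z^k\rangle \;\ge\; B_k\langle G(z^k),z^*-\bar z^k\rangle.
\]
This replaces $z^k$ by $z^*$ in the cross term, after which Young's inequality pairs it directly against the $c_k\|z^*-\bar z^k\|^2$ term (so do \emph{not} discard that term early):
\[
V_k \;\ge\; \tfrac{A_k}{2}\|G(z^k)\|^2 + \Bigl(c_k-\tfrac{B_k^2}{2A_k}\Bigr)\|z^*-\bar z^k\|^2.
\]
With $\tfrac{B_k^2}{2A_k}=\tfrac{k+1}{\alpha_k(k+2)}\le\tfrac{1}{\alpha_\infty}$ and $c_k\ge c_\infty$, the hypothesis $c_\infty\alpha_\infty\ge 1$ makes the bracket nonnegative, and the bound follows exactly as you wrote. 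No intermediate estimate from the proof of \cref{LpnvLm_1} is needed; monotonicity of $G$ does all the work. (Minor slip: you wrote $(0,\tfrac{1}{R})\subset(0,\tfrac{3}{4R})$; the inclusion goes the other way, so one does need $\alpha_0<\tfrac{3}{4R}$ for \cref{normal_alpha_lemma}.)
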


\cref{LpnvLm_1} and \cref{conv_conc_convergence_thm} are the primary result of \cite{alcala2023moving} for the deterministic moving anchor algorithms in the convex-concave EAG-V algorithm setting.

\subsection{The Anchored Popov's scheme}

One of the shortcomings of traditional extragradient methods such as those proposed in \cite{alcala2023moving}, \cite{yoon_ryu}, \cite{lee2021fast} is that they require at least two gradient evaluations at each time step.
(Even in \cite{alcala2023moving}, the evaluation at \cref{eagv update3} can be saved and used in \cref{eagv update1} at the next time step.)
In \cite{tran2021halpern}, a new variant of the original extra-anchored gradient algorithm \cite{yoon_ryu} appeared as a variant of the classical Popov's scheme \cite{popov1980modification}.
One advantage of Popov's scheme is that it requires only a single evaluation of the operator. 
This partially motivated the development of the anchored Popov's scheme, which we now detail.

Popov's original algorithm \cite{popov1980modification} has the following form, very similar to the original extragradient \cite{korpelevich1976extragradient}:

\begin{align*}
    \hat{z}^{k} &= z^{k} - \eta_{k}G(\hat{z}^{k-1}) \\
    z^{k+1} &= z^{k} - \eta_{k}G(\hat{z}^{k})
\end{align*}

The key difference between this and extragradient is that the directions computed via gradient are only ever evaluated at the points $\hat{z}^{k}$. The anchored Popov's scheme has the form 

\begin{align}
    \hat{z}^{k} &= \beta_{k}z^{0} + (1 - \beta_{k})z^{k} - \eta_{k}G(\hat{z}^{k-1}) \label{anch_pop_1}\\
    z^{k+1} &= \beta_{k}z^{0} + (1 - \beta_{k})z^{k} - \eta_{k}G(\hat{z}^{k}), \label{anch_pop_2}
\end{align}

where setting $\beta_{k} = 0$ brings back the original Popov's scheme. It is also worth pointing out that this classical method is equivalent to the optimistic gradient method used in online learning \cite{hsieh2019}.
The stepsize regime $\eta_{k}$ developed in \cite{tran2021halpern} is a slight variant of the one developed in \cite{yoon_ryu} but has more or less the same analysis.

We also point out that the anchored Popov scheme may be rewritten as 
\[
\begin{cases}
    \hat{z}^{k} &= (\beta_{k} - \frac{\beta_{k-1}\eta_{k}}{\eta_{k-1}})z^{0} + (1 - \beta_{k} + \frac{\eta_{k}}{\eta_{k-1}})z^{k} - \frac{(1 - \beta_{k-1})\eta_{k}}{\eta_{k-1}}x_{k-1}, \\
    z^{k+1} &= \beta_{k}z^{0} + (1 - \beta_{k})z^{k} - \eta_{k}G(\hat{z}^{k}),
\end{cases}
\]
which, for $\beta_{k} := 0$ and $\eta_{k} = \eta > 0$, reduces to $z^{k+1} = z^{k} - \eta G(2z^{k} - z^{k-1}),$ which is the \textit{reflected gradient method} proposed in \cite{Malitsky_2015}. This implies that the anchored Popov's scheme is an accelerated reflected gradient method.

Finally, we remark that this method has a similar Lyapunov analysis to other anchor methods \cite{yoon_ryu}, \cite{lee2021fast}, \cite{alcala2023moving} with a similar $O(1/k^{2})$ convergence rate guarantee on the squared gradient norm, though the anchored Popov's scheme analysis is somewhat more arduous as the operator evaluation doesn't depend on the previous iterate. In addition, there is some overestimation of constants in the demonstration of convergence, leading to a larger constant factor in the convergence bound, see Theorem 1, \cite{tran2021halpern}.

\section{The Stochastic Moving Anchor}


Let $G$ be an $R-$Lipschitz, monotone operator on $\mathbb{R}^{n} \times \mathbb{R}^{m}$, and let $N=n+m.$
To develop the stochastic moving anchor EAG-V algorithm, the following additional clarifications and assumptions are necessary.
\begin{enumerate}
    \item $\displaystyle \frac{1}{N} \sum_{i=1}^{N}G_{i}(z) = \mathbb{E}[G_{\theta}(z) | z] = G(z)$, or the expectation of $G_{\theta}(z)$ given $z$ is $G(z)$ for $\theta$ iid on ${1,\ldots,N}.$ \label{condition1}
    \item $G$ has condition number $\overline{C_{G}}(z)$, dependent on the point $z$ being evaluated by $G$, such that $\displaystyle \frac{1}{N} \sum_{i=1}^{N} \|G_{i}(z)\|^{2} \leq  \overline{C_{G}}(z)\|G(z)\|^{2}$ and $1 \leq\overline{C_{G}}(z) \leq N$ are true for all $z \in \mathbb{R}^{n} \times \mathbb{R}^{m}$, with the term $\overline{K_{G}}(z) := N\overline{C_{G}}(z)$ resulting in $1 \leq \overline{K_{G}}(z) \leq N^{2}.$
    Note that this also gives us an a priori bound on certain variance terms.
    Given three indices $i, j, k,$ (whose meaning will become apparent below), we have
    \begin{align}
        Var(z) &:= \mathbb{E}[\langle G(z) - G_{i_{j}^{k}}(z), G(z) - G_{i_{j}^{k}}(z)\rangle | z] \notag \\
        &= \frac{1}{N} \sum_{i=1}^{N} \|G_{i}(z)\|^{2} - \|G(z)\|^{2} \notag \\
        &\leq (\overline{C_{G}} - 1)\|G(z)\|^{2} \label{var_bound}
    \end{align} 
    no matter what values $i, j, k,$ may take.
    Note that the condition number, as defined, has this property \eqref{var_bound} that holds for any $z$; however, we are particularly interested in the behavior of $Var(z^{k}), Var(z^{k+1/2}),$ where $z^{k}$ is the $k-$th iteration of a stochastic algorithm and $z^{k+1/2}$ is a sort of extrapolation step.
    Therefore, we impose one other useful bound regarding this condition number $\overline{C_{G}}(z)$ as it relates to the stochastic iterates and half-iterates in the stochastic algorithm we define below.
    \begin{condition} \label{summability_condition}
    The term $\overline{C_{G}}(z)$ depends on the local value $z$ being evaluated by the operator $G$ in such a way that the following inequalities hold for all $k$, where $k$ is the iteration count of a stochastic algorithm:
    \begin{align}
        \big{(}\overline{C_{G}}(z^{k})-1\big{)}\|G(z^{k})\|^{2} &\leq \frac{C_{1}}{(k+1)^{4}} \label{summ_cond_1} \\
        \mathbb{E}[\big{(}\overline{C_{G}}(z^{k+1/2})-1\big{)}\|G(z^{k+1/2})\|^{2}|\bar{z}^{k},z^{k}] &\leq \frac{C_{2}}{(k+1)^{4}}, \label{summ_cond_2}
    \end{align}
    where $C_{1}, C_{2}$ are fixed positive constants.
    \end{condition}
    With \cref{summability_condition} in mind, we will henceforth use the notation $C_{G}(z^{k}), K_{G}(z^{k})$ to indicate two nonnegative, real-valued functions from $\mathbb{R}^{n} \times \mathbb{R}^{m}$ to $\mathbb{R}$ that behave according to \eqref{summ_cond_1}, \eqref{summ_cond_2}.
    In particular, we note that for any $z^{k}$ coming from a stochastic algorithm, we have that eventually,
    \begin{equation}
        C_{G}(z^{k}) \leq \overline{C_{G}},
    \end{equation}
    where we define $\bar{C_{G}}$ to be the supremum of such condition numbers independent of $z$.
    \label{condition2}
    \item For all $z_{1}, z_{2} \in \mathbb{R}^{n} \times \mathbb{R}^{m},$ $\displaystyle \|G_{i}(z_{1}) - G_{i}(z_{2})\|^{2} \leq R_{i}^{2}\|z_{1} - z_{2}\|^{2}$ with $R = \sqrt{\sum R_{i}}.$ \label{condition3}
\end{enumerate}
Furthermore, define $\{i_{k}^{1}, i_{k}^{2}, i_{k}^{3}\}_{k=1}^{\infty}$ to be uniformly iid random on $\{1, \ldots, N\}$.
Then the \textbf{stochastic EAG-V with moving anchor} is defined as 
\begin{align}
    \label{stoch eagv update1} z^{k+1/2}   &= z^{k} + \frac{1}{k+2}(\bar{z}^{k} - z^{k}) - \alpha_{k}G_{i_{k}^{1}}(z^{k})\\
    \label{stoch eagv update2} z^{k+1}     &= z^{k} + \frac{1}{k+2}(\bar{z}^{k} - z^{k}) - \alpha_{k}G_{i_{k}^{2}}(z^{k + 1/2})\\
    \label{stoch eagv update3} \bar{z}^{k+1}&= \bar{z}^{k} + \Tilde{\gamma}_{k+1}G_{i_{k}^{3}}(z^{k+1}) \\
    \label{gamma_update_2} \Tilde{\gamma}_{k+1} &= \frac{B_{k+1}}{c_{k+1}\overline{K_{G}}(1 + \frac{1}{\delta_{k}})}
\end{align}
where each $G_{i_{k}^{j}}, j=1,2,3,$ is assumed to be an unbiased estimator of $G(z),$ meaning that for $\xi_{i,j,k}(z) := G(z) - G_{i_{k}^{j}}(z),\; \mathbb{E}[\xi_{i,j,k}(z) |z] = 0.$
With these modifications, we may keep the update \eqref{c_update_1} the same in the stochastic setting.
First, we offer a lemma that clarifies the behavior of $\alpha_{k}$; our primary modification the original version, due to \cite{yoon_ryu}, is an updated bound for $\alpha_{0}$.
\begin{lemma} \label{funny_alpha_lemma}
    The sequence $\alpha_{k}$ \eqref{normal_alpha} starting at $\alpha_{0} \in (0, \eta), \; \eta := \min \{\frac{3}{4R\sqrt{\overline{K_{G}}}}, \frac{1}{\sqrt{2}R}\}$, monotonically decreases to a positive limit.
    In particular, when $\sqrt{\overline{K_{G}}} = 1,$ we recover \cref{normal_alpha_lemma}.
\end{lemma}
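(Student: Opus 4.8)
The plan is to treat \cref{funny_alpha_lemma} as a purely scalar statement about the recursion \eqref{normal_alpha}: the factor $\overline{K_G}$ does not appear in that recursion, so it enters only by restricting the admissible range of $\alpha_0$. The first thing to record is that $\eta\le\tfrac{3}{4R}$ in all cases, since $\overline{K_G}\ge 1$ forces $\tfrac{3}{4R\sqrt{\overline{K_G}}}\le\tfrac{3}{4R}$ and also $\tfrac1{\sqrt2}<\tfrac34$; hence $\alpha_0^2R^2<9/16<1$. Writing the update in its factored form
\[
    \alpha_{k+1}=\alpha_k(1-p_k),\qquad p_k:=\frac{1}{(k+1)(k+3)}\cdot\frac{\alpha_k^2R^2}{1-\alpha_k^2R^2},
\]
will make both of the claims transparent.

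First I would prove, by induction on $k$, that $0<\alpha_{k+1}<\alpha_k$ and $\alpha_k<\eta$. The inductive step is immediate: if $\alpha_k^2R^2<9/16$ then $\frac{\alpha_k^2R^2}{1-\alpha_k^2R^2}<9/7$, and since $(k+1)(k+3)\ge 3$ this gives $p_k\in(0,1)$, so $\alpha_{k+1}=\alpha_k(1-p_k)$ is strictly between $0$ and $\alpha_k$; in particular it stays below $\alpha_0<\eta$, and $1-\alpha_k^2R^2$ stays bounded away from $0$, so the recursion is well defined throughout. This establishes strict monotone decrease, hence the existence of $\alpha_\infty:=\lim_{k\to\infty}\alpha_k\ge 0$.

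The substantive point is $\alpha_\infty>0$, which I would obtain from a summability argument. Telescoping gives $\alpha_k=\alpha_0\prod_{j=0}^{k-1}(1-p_j)$, so it suffices to show $\sum_{j\ge 0}p_j<\infty$. By the monotonicity just established, $\frac{\alpha_j^2R^2}{1-\alpha_j^2R^2}$ is bounded above by its value at $\alpha_0$, call it $M<\infty$; then $p_j\le M/\big((j+1)(j+3)\big)$, and the series $\sum_{j\ge 0}\frac{1}{(j+1)(j+3)}$ telescopes to $\tfrac34$, so $\sum_j p_j\le\tfrac34 M<\infty$. Since the $p_j$ are also bounded away from $1$, this forces $\prod_{j\ge 0}(1-p_j)>0$ and therefore $\alpha_\infty=\alpha_0\prod_{j\ge 0}(1-p_j)>0$.

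Finally, note that the argument used nothing about $\alpha_0$ beyond $\alpha_0^2R^2<9/16$, i.e.\ $\alpha_0\in(0,\tfrac{3}{4R})$; specializing to $\overline{K_G}=1$, where the first branch of $\eta$ reduces to $\tfrac{3}{4R}$, therefore recovers \cref{normal_alpha_lemma}. I do not expect a genuine obstacle here; the only points needing care are the order of the two steps — the uniform bound $M$ on $\frac{\alpha_j^2R^2}{1-\alpha_j^2R^2}$ relies on monotonicity, so monotone decrease must be in hand before the positivity of the limit — and checking at the outset that $\eta\le\tfrac{3}{4R}$ so that $p_k<1$ holds for every $k$, the case $k=0$ being the binding one. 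The sharper branches of $\eta$, namely $\tfrac{3}{4R\sqrt{\overline{K_G}}}$ and $\tfrac1{\sqrt2 R}$, are not needed for this lemma; they are the bounds that the subsequent stochastic Lyapunov analysis will require of the stepsizes, and by monotonicity they persist for all $k$ once $\alpha_0$ satisfies them.
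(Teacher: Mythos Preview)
Your proof is correct and takes essentially the same approach as the paper: both exploit the explicit form $\alpha_k-\alpha_{k+1}=\alpha_k\,p_k$ with $p_k=\frac{1}{(k+1)(k+3)}\cdot\frac{\alpha_k^2R^2}{1-\alpha_k^2R^2}$, obtain monotone decrease from $p_k\in(0,1)$, and derive positivity of the limit from the uniform bound on $\frac{\alpha_k^2R^2}{1-\alpha_k^2R^2}$ together with $\sum_{j\ge0}\frac{1}{(j+1)(j+3)}=\tfrac34$. The paper packages the last step as a direct telescoping bound $\alpha_0-\alpha_\infty<\eta\,\alpha_0$ inside a single induction, whereas you phrase it via the infinite-product criterion $\sum p_j<\infty\Rightarrow\prod(1-p_j)>0$; the difference is only cosmetic, and your observation that the sharper branches of $\eta$ are unused here and serve only the downstream Lyapunov analysis matches the paper's own remark following the lemma.
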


\begin{remark}
    Squeezing down the interval where $\alpha_{0}$ may start is a choice made to force the positivity of the term $(1 - \alpha_{k}^{2}R^{2} - \beta_{k})$ in \eqref{back_to_funny_alpha_lemma} for ease of analysis.
    With a different choice of $\beta_{k}$, one may wish to modify the upper bound $\eta$ by choosing the second term to be $\displaystyle \frac{\sqrt{1 - \hat{\beta}}}{R},$ where $\hat{\beta} := \sup \beta_{k}$ is not equal to $1.$
\end{remark}

\begin{proof}
    We assume $R = 1$ and $\sqrt{\overline{K_{G}}} = 1$ without loss of generality.
    We may rewrite \eqref{normal_alpha} as
    \begin{equation}
        \alpha_{k} - \alpha_{k+1} = \frac{\alpha_{k}^{3}}{(k+1)(k+3)(1 - \alpha_{k}^{2})}. \label{alpha_difference}
    \end{equation}
    Suppose that we have established that for some $N \geq 0, \; 0 < \alpha_{N} < \rho$ for some $\rho \in (0,1)$ that satisfies 
    \begin{equation}
        \eta := \frac{1}{2}\Big{(}\frac{1}{N + 1} + \frac{1}{N+2} \Big{)}\frac{\rho^{2}}{1-\rho^{2}} < 1. \label{eta_and_rho}
    \end{equation}
    \eqref{eta_and_rho} holds for all $N \geq 0$ if $\rho < \frac{3}{4}.$
    We now show that with \eqref{eta_and_rho},
    \begin{equation*}
        \alpha_{N} > \alpha_{N+1} > \cdots > \alpha_{N+k} > (1-\eta)\alpha_{N} \text{ for all }k > 0,
    \end{equation*}
    allowing us to obtain $\alpha_{k}$ as a monotonically decreasing sequence to some $\alpha$ such that $\alpha \geq (1-\eta)\alpha_{N}$.
    It suffices to prove $(1-\eta)\alpha_{N} < \alpha_{N+k} < \rho$ for all $k \geq 0,$ as \eqref{alpha_difference} indicates that $\{\alpha_{k}\}_{k=0}^{\infty}$ is decreasing.

    Use induction on $k$ to prove that $\alpha_{N+k} \in ((1-\eta)\alpha_{N},\rho)$.
    The case $k=0$ is trivial.
    Now suppose that $(1-\eta)\alpha_{N} < \alpha_{N+j} < \rho$ holds true for $j = 0, \ldots, k.$
    Then by \eqref{alpha_difference}, for each $0 \leq j \leq k$ we have
    \begin{align*}
        0 < \alpha_{N} - \alpha_{N+k+1} <& \sum_{j=0}^{k} \frac{1}{(N+j+1)(N+j+3)} \frac{\rho^{2}\alpha_{N}}{1-\rho^{2}} \\
        <& \frac{\rho^{2}\alpha_{N}}{1-\rho^{2}} \sum_{j=0}^{\infty} \frac{1}{(N+j+1)(N+j+3)} \\
        =& \frac{\rho^{2}\alpha_{N}}{1-\rho^{2}} \frac{1}{2} \Big{(} \frac{1}{N+1} \frac{1}{N+2}\Big{)} = \eta \alpha_{N},
    \end{align*}
    which gives $(1-\eta) \alpha_{N} < \alpha_{N+k+1} < \alpha_{N} < \rho,$ completing the induction.
\end{proof}
We need a careful Lyapunov analysis to handle the newly introduced stochasticity.
Rather than focusing on making a \textit{nonincreasing} Lyapunov functional, we aim to control how negative the differences between subsequent terms may be via variances.
The analysis here is inspired by the analogous stochastic Lyapunov lemma in \cite{lee2021fast}.

\begin{lemma}[Stochastic Lyapunov Functional, Moving Anchor EAG-V] \label{stochastic_descent_lemma}
    Consider the stochastic EAG-V with moving anchor \eqref{stoch eagv update1}, \eqref{stoch eagv update2}, \eqref{stoch eagv update3}, \eqref{gamma_update_2}, \eqref{c_update_1} along with conditions \ref{condition1}, \ref{condition2}, \ref{condition3}, and $\{i_{k}^{1}, i_{k}^{2}, i_{k}^{3}\}_{k=1}^{\infty}$ as previously described.
    Suppose we are given the sequences $\{A_{k}\}_{k=0}^{\infty}, \{B_{k}\}_{k=0}^{\infty}$ as described in \cref{LpnvLm_1} and the sequence $\{\alpha_{k}\}_{k=0}^{\infty}$ described in \cref{funny_alpha_lemma}. 
    Define the stochastic Lyapunov functional as
    \begin{align}
        V_{k} = A_{k}\|G(z^{k})\|^{2}+B_{k}\langle G(z^{k}), z^{k}-\bar{z}^{k}\rangle + c_k\|z^{*} - \bar{z}^{k}\|^{2}, \label{stoch_fxnal}
    \end{align}
    Then, \eqref{stoch_fxnal} satisfies the following:
    \begin{align*}
        \mathbb{E}[V_{k} - V_{k+1} | \bar{z}^{k},z^{k}] & \geq \\
         -2A_{k}\alpha_{k}R\cdot Var(z^{k})& - \frac{2A_{k}}{1-\beta_{k}}\alpha_{k}R\mathbb{E}[Var(z^{k+1/2})|\bar{z}^{k},z^{k}]
    \end{align*}
\end{lemma}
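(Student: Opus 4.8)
The plan is to compute $\mathbb{E}[V_k - V_{k+1}\mid \bar z^k, z^k]$ by expanding the three terms of $V_{k+1}$ via the stochastic updates \eqref{stoch eagv update1}--\eqref{stoch eagv update3}, writing $G_{i_k^j}(z) = G(z) - \xi_{i,j,k}(z)$ and using the unbiasedness $\mathbb{E}[\xi_{i,j,k}(z)\mid z] = 0$. Since $V_k$ is deterministic under this conditioning, all the randomness lives in $z^{k+1/2}, z^{k+1}, \bar z^{k+1}$ and $G(z^{k+1})$. The guiding principle is that replacing every estimator by its conditional mean collapses the whole computation to the deterministic descent estimate of \cref{LpnvLm_1}, which is nonnegative; the task is to isolate and bound the stochastic discrepancies. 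Any $\xi$-linear cross term whose other factor is deterministic given the conditioning vanishes -- this includes the $\langle G(z^k),\xi_{i,1,k}(z^k)\rangle$-type terms and the $-2\langle G(z^{k+1}),\xi_{i,3,k}(z^{k+1})\rangle$ obtained from expanding $\|G_{i_k^3}(z^{k+1})\|^2$ -- so the only surviving discrepancies come from the two places an operator is evaluated at a $\xi$-dependent point.

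Concretely, $z^{k+1/2} = \hat z^{k+1/2} + \alpha_k\xi_{i,1,k}(z^k)$ with $\hat z^{k+1/2} := z^k + \beta_k(\bar z^k - z^k) - \alpha_k G(z^k)$ the deterministic extrapolation, so $R$-Lipschitzness gives $\|G(z^{k+1/2}) - G(\hat z^{k+1/2})\| \le \alpha_k R\,\|\xi_{i,1,k}(z^k)\|$; carrying this through the terms that contain $G(z^{k+1/2})$ and collecting coefficients via the recurrences \eqref{ordinary_A}--\eqref{ordinary_alpha}, the net perturbation is bounded below by $-2A_k\alpha_k R\,\|\xi_{i,1,k}(z^k)\|^2$, and taking $\mathbb{E}[\cdot\mid\bar z^k,z^k]$ turns $\mathbb{E}[\|\xi_{i,1,k}(z^k)\|^2\mid z^k]$ into $Var(z^k)$ by its definition in \eqref{var_bound}. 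Likewise $z^{k+1}$ carries the further perturbation $\alpha_k\xi_{i,2,k}(z^{k+1/2})$, contributing $\|G(z^{k+1}) - G(\cdot)\| \le \alpha_k R\,\|\xi_{i,2,k}(z^{k+1/2})\|$ into the terms containing $G(z^{k+1})$; the coefficient there picks up the extra factor $(1-\beta_k)^{-1}$ from $B_{k+1} = B_k/(1-\beta_k)$, giving a lower bound $-\tfrac{2A_k}{1-\beta_k}\alpha_k R\,\|\xi_{i,2,k}(z^{k+1/2})\|^2$, and $\mathbb{E}[\cdot\mid\bar z^k,z^k]$ together with the tower property yields $-\tfrac{2A_k}{1-\beta_k}\alpha_k R\,\mathbb{E}[Var(z^{k+1/2})\mid\bar z^k,z^k]$.

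It remains to see that the anchor step contributes nothing new. Here $\|G_{i_k^3}(z^{k+1})\|^2$ enters $c_{k+1}\|z^* - \bar z^{k+1}\|^2$ quadratically, with $\mathbb{E}[\|G_{i_k^3}(z^{k+1})\|^2\mid z^{k+1}] = \|G(z^{k+1})\|^2 + Var(z^{k+1}) \le \overline{C_G}(z^{k+1})\|G(z^{k+1})\|^2$ by \eqref{var_bound}, while the $\tilde\gamma_{k+1}$-linear terms reproduce the anchor portion of the deterministic estimate with $\tilde\gamma_{k+1}$ in place of $\gamma_{k+1}$. This is precisely the role of $\tilde\gamma_{k+1} = \gamma_{k+1}/\overline{K_G}$: since $\overline{C_G}(z^{k+1}) \le N \le \overline{K_G}^2$ (using $1\le\overline{C_G}(z)\le N$ from Condition~\ref{condition2} and $\overline{K_G} = N\overline{C_G}\ge N$), one has $c_{k+1}\tilde\gamma_{k+1}^2\,\overline{C_G}(z^{k+1}) \le c_{k+1}\tilde\gamma_{k+1}^2\,\overline{K_G}^2 = c_{k+1}\gamma_{k+1}^2$, so re-running the anchor sub-computation of \cref{LpnvLm_1} (including the Young split from $c_{k+1}=c_k/(1+\delta_k)$) with $\tilde\gamma_{k+1}$ shows this part of $\mathbb{E}[V_k-V_{k+1}\mid\bar z^k,z^k]$ is at least its deterministic counterpart, hence nonnegative, and produces no $Var(z^{k+1})$ term. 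Summing the three pieces gives the claim.

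The main obstacle is the coefficient bookkeeping: one has to revisit the proof of \cref{LpnvLm_1} at the level of its intermediate inequalities -- not just its conclusion -- to verify that the $\xi$-perturbations enter solely through $G(z^{k+1/2})$ and $G(z^{k+1})$ with weights bounded by $2A_k$ and $\tfrac{2A_k}{1-\beta_k}$ respectively, so that they collapse into exactly the two variance terms on the right-hand side; and one must check, with signs and the $\delta_k$-Young split lined up, that shrinking $\gamma_{k+1}$ to $\tilde\gamma_{k+1} = \gamma_{k+1}/\overline{K_G}$ creates enough slack to absorb the variance blow-up $\overline{C_G}(z^{k+1})\|G(z^{k+1})\|^2$ in the anchor's quadratic term, so that $Var(z^{k+1})$ disappears entirely from the bound.
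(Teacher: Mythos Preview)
Your plan is essentially the paper's: split $V_k-V_{k+1}$ into an anchor block handled by the rescaling $\tilde\gamma_{k+1}=\gamma_{k+1}/\overline{K_G}$ and a main block whose ``deterministic core'' is nonnegative, leaving exactly the two cross terms $2A_k\langle G(z^{k+1/2}),\,G_{i_k^1}(z^k)-G(z^k)\rangle$ and $\tfrac{2A_k}{1-\beta_k}\langle G(z^{k+1}),\,G_{i_k^2}(z^{k+1/2})-G(z^{k+1/2})\rangle$, each bounded by centring at the deterministic extrapolation and using $R$-Lipschitzness to get $\alpha_kR\,\|\xi\|^2$. The one point your sketch understates is that the main block does \emph{not} collapse verbatim to \cref{LpnvLm_1}: once the $R$-smoothness inequality is written with the stochastic displacement $z^{k+1/2}-z^{k+1}=\alpha_k\bigl(G_{i_k^2}(z^{k+1/2})-G_{i_k^1}(z^k)\bigr)$ and $-A_k\|G_{i_k^2}(z^{k+1/2})\|^2$ is controlled via Condition~\ref{condition2}, the coefficient of $\|G(z^{k+1/2})\|^2$ picks up an extra $K_G(z^{k+1/2})$ factor, so the completing-the-square step must be redone (the paper does this in \eqref{lose_the_coefficient}--\eqref{final_ineq}) rather than simply cited from the deterministic lemma.
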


\begin{proof}
    With our Lyapunov functional \eqref{stoch_fxnal} in mind, we derive the following useful relations:
    \begin{align}
        z^{k} - z^{k+1} &= \beta_{k}(z^{k} - \bar{z}^{k}) + \alpha_{k}G_{i_{k}^{2}}(z^{k+1/2})\label{stoch_relation_1} \\
        z^{k+1/2}-z^{k+1} &= \alpha_{k}\big{(}G_{i_{k}^{2}}(z^{k+1/2}) - G_{i_{k}^{1}}(z^{k}) \big{)}\label{stoch_relation_2} \\ 
        \bar{z}^{k}-z^{k+1} &= (1-\beta_{k})(\bar{z}^{k} - z^{k}) + \alpha_{k}G_{i_{k}^{2}}(z^{k+1/2}) \label{stoch_relation_3} \\
        \bar{z}^{k} - \bar{z}^{k+1} &= -\Tilde{\gamma}_{k+1}G_{i_{k}^{3}}(z^{k+1}) \label{stoch_relation_4}.
    \end{align}
    \eqref{stoch_relation_1} is $z^{k}$ subtract \eqref{stoch eagv update1}, \eqref{stoch_relation_2} is \eqref{stoch eagv update2} subtract \eqref{stoch eagv update1}, \eqref{stoch_relation_3} is $\bar{z}^{k}$ subtract \eqref{stoch eagv update2}, and \eqref{stoch_relation_4} is \eqref{stoch eagv update3} rearranged. 
    As already evidenced, much of this proof will parallel the previous descending Lyapunov lemmas from the deterministic cases, but our end goal is to capture how negative the differences can be rather than force positivity.
    We introduce a nonnegative inner product to begin the process of simplifying:
    \begin{align*}
        V_{k}& - V_{k+1} \geq \\
        & A_{k}\|G(z^{k})\|^{2} + B_{k}\langle G(z^{k}), z^{k} - \bar{z}^{k}\rangle - A_{k+1}\|G(z^{k+1})\|^{2} \\
        &- B_{k+1}\langle G(z^{k+1}),z^{k+1} - \bar{z}^{k+1} \rangle - \frac{B_{k}}{\beta_{k}}\langle z^{k} - z^{k+1},G(z^{k}) - G(z^{k+1}) \rangle \\
        &+ c_{k}\|z^{*} - \bar{z}^{k}\|^{2} - c_{k+1}\|z^{*} - \bar{z}^{k+1}\|^{2}
    \end{align*}
    After some additional computation utilizing \eqref{stoch_relation_1} through \eqref{stoch_relation_4}, we obtain
    \begin{align*}
        V_{k}& - V_{k+1} \geq \\
        & \underbrace{A_{k}\|G(z^{k})\|^{2} - A_{k+1}\|G(z^{k+1})\|^{2} - \frac{\alpha_{k}B_{k}}{\beta_{k}}\langle G_{i_{k}^{2}}(z^{k+1/2}), G(z^{k}) - G(z^{k+1})\rangle}_{\text{I}} \\
        &\underbrace{+\alpha_{k}B_{k+1}\langle G(z^{k+1}),G_{i_{k}^{2}}(z^{k+1/2}) \rangle}_{\text{I}}
        \underbrace{+ c_{k}\|z^{*} - \bar{z}^{k}\|^{2} - c_{k+1}\|z^{*} - \bar{z}^{k+1}\|^{2}}_{\text{II}} \\
        &\underbrace{+B_{k+1}\Tilde{\gamma}_{k+1}\langle G_{i_{k}^{3}}(z^{k+1}), G(z^{k+1}) \rangle}_{\text{II}}. 
    \end{align*}
    From here, we will deal with I and II separately. 
    We will deal with II first.
    To begin, let's analyze the inner product contained within II under expectation:
    \begin{align}
        &\mathbb{E}[B_{k+1}\Tilde{\gamma}_{k+1}\langle G_{i_{k}^{3}}(z^{k+1}), G(z^{k+1}) \rangle|\bar{z}^{k}, z^{k}] \label{stoch_inner_prod_1} \\
        =&B_{k+1}\Tilde{\gamma}_{k+1}\mathbb{E}\big{[} \mathbb{E}[\langle G_{i_{k}^{3}}(z^{k+1}),G(z^{k+1})\rangle|z^{k+1},\bar{z}^{k},z^{k}]\big{|}\bar{z}^{k}, z^{k}\big{]} \label{stoch_inner_prod_2} \\
        =&B_{k+1}\Tilde{\gamma}_{k+1}\mathbb{E}[\|G(z^{k+1})\|^{2}|\bar{z}^{k},z^{k}] \label{stoch_inner_prod_3} \\
        \geq& B_{k+1}\Tilde{\gamma}_{k+1}\mathbb{E}\Big{[}\frac{\|G_{i_{k}^{3}}(z^{k+1})\|^{2}}{K_{G}(z^{k+1})}|\bar{z}^{k},z^{k}\Big{]} \label{stoch_inner_prod_4} \\
        =& B_{k+1}\mathbb{E}\Big{[}\frac{\|\bar{z}^{k} - \bar{z}^{k+1}\|^{2}}{\Tilde{\gamma}_{k+1}K_{G}(z^{k+1})}|\bar{z}^{k},z^{k}\Big{]}. \label{stoch_inner_prod_5}
    \end{align}
    From \eqref{stoch_inner_prod_1} to \eqref{stoch_inner_prod_2} to \eqref{stoch_inner_prod_3}, we apply the law of iterated expectation to get $G(z^{k+1})$.
    Knowing $z^{k+1}$, we recall that $G_{i_{k}^{3}}$ is an unbiased estimator of $G$ to get \eqref{stoch_inner_prod_3}.
    The inequality \eqref{stoch_inner_prod_4} results from \ref{condition2}, and \eqref{stoch_inner_prod_5} results from \eqref{stoch_relation_4}.

    Thus after taking expectation, II changes into the following:
    \begin{align}
        \mathbb{E}&[\text{II} |\bar{z}^{k},z^{k}] \\
        \geq &\mathbb{E}[c_{k}\|z^{*} - \bar{z}^{k}\|^{2} - c_{k+1}\|z^{*} - \bar{z}^{k+1}\|^{2} + \frac{B_{k+1}}{\Tilde{\gamma}_{k+1}K_{G}}\|\bar{z}^{k}-\bar{z}^{k+1}\|^{2}|\bar{z}^{k},z^{k}] \\
        \geq &\mathbb{E}[c_{k}\|z^{*} - \bar{z}^{k}\|^{2} - c_{k+1}\big{(}(1 + \delta_{k})\|z^{*} - \bar{z}^{k}\|^{2} + (1+\frac{1}{\delta_{k}})\|\bar{z}^{k} - \bar{z}^{k+1}\|^{2}\big{)} \notag \\
        + &\frac{B_{k+1}}{\Tilde{\gamma}_{k+1}K_{G}(z^{k+1})}\|\bar{z}^{k}-\bar{z}^{k+1}\|^{2} |\bar{z}^{k},z^{k}] \label{cauchysch}
    \end{align}
    where \eqref{cauchysch} is an application of Cauchy-Schwartz to $\|z^{*} - \bar{z}^{k+1}\|$.
    Because \newline $\Tilde{\gamma}_{k+1} = \frac{1}{\overline{K_{G}}}\frac{B_{k+1}}{c_{k+1}(1+\frac{1}{\delta_{k}})}$, we find
    \[
        \mathbb{E}[\text{II}|\bar{z}^{k},z^{k}] \geq 0,
    \]
    and now we are left with I:
    \begin{align*}
        \mathbb{E}[V_{k} &- V_{k+1}|\bar{z}^{k},z^{k}] \geq \mathbb{E}[I|\bar{z}^{k},z^{k}]\\
        =\mathbb{E}&\big{[} A_{k}\|G(z^{k})\|^{2} - A_{k+1}\|G(z^{k+1})\|^{2} - \frac{\alpha_{k}B_{k}}{\beta_{k}}\langle G_{i_{k}^{2}}(z^{k+1/2}),G(z^{k}) - G(z^{k+1}) \rangle \\
        & + \alpha_{k}B_{k+1}\langle G(z^{k+1}),G_{i_{k}^{2}}(z^{k+1/2}) \rangle \big{|} \bar{z}^{k}, z^{k} \big{]}.
    \end{align*}
    First, we note that 
    \begin{align}
        \|G(z^{k+1/2}) - G(z^{k+1})\|^{2} &\leq R^{2} \|z^{k+1/2} - z^{k+1}\|^{2} \notag \\
        &= R^{2} \alpha_{k}^2\|G_{i_{k}^{2}}(z^{k+1/2}) - G_{i_{k}^{1}}(z^{k})\|^{2} \\
        & \Leftrightarrow \notag \\
        -A_{k} \|G_{i_{k}^{2}}(z^{k+1/2}) - G_{i_{k}^{1}}(z^{k})\|^{2} + &\frac{A_{k}}{R^{2}\alpha_{k}^{2}}\|G(z^{k+1/2}) - G(z^{k+1})\|^{2} \leq 0 \label{R_smooth_stoch}
    \end{align}
    by $R-$smoothness, so that
    \begin{align}
        \mathbb{E}[\text{I}|&\bar{z}^{k},z^{k}] \geq \notag \\
        & \mathbb{E}\big{[} A_{k}\|G(z^{k})\|^{2} - A_{k+1}\|G(z^{k+1})\|^{2} - \frac{\alpha_{k}B_{k}}{\beta_{k}}\langle G_{i_{k}^{2}}(z^{k+1/2}),G(z^{k}) - G(z^{k+1}) \rangle \notag \\
        & + \alpha_{k}B_{k+1}\langle G(z^{k+1}),G_{i_{k}^{2}}(z^{k+1/2}) \rangle -A_{k} \|G_{i_{k}^{2}}(z^{k+1/2}) - G_{i_{k}^{1}}(z^{k})\|^{2} \notag \\
        & + \frac{A_{k}}{R^{2}\alpha_{k}^{2}}\|G(z^{k+1/2}) - G(z^{k+1})\|^{2} \big{|} \bar{z}^{k}, z^{k}\big{]} \label{I_negative_term1} \\
        = & \mathbb{E}\big{[} A_{k}\|G(z^{k})\|^{2} - A_{k+1}\|G(z^{k+1})\|^{2} - \frac{\alpha_{k}B_{k}}{\beta_{k}}\langle G_{i_{k}^{2}}(z^{k+1/2}),G(z^{k}) - G(z^{k+1}) \rangle \notag \\
        & + \alpha_{k}B_{k+1}\langle G(z^{k+1}),G_{i_{k}^{2}}(z^{k+1/2}) \rangle \notag \\
        & -A_{k} \big{(} \|G_{i_{k}^{2}}(z^{k+1/2})\|^{2} - 2\langle G_{i_{k}^{2}}(z^{k+1/2}),G_{i_{k}^{1}}(z^{k}) \rangle + \|G_{i_{k}^{1}}(z^{k})\|^{2} \big{)} \notag \\
        & + \frac{A_{k}}{R^{2}\alpha_{k}^{2}} \big{(}\|G(z^{k+1/2})\|^{2} - 2\langle G(z^{k+1/2}), G(z^{k+1})\rangle +\|G(z^{k+1})\|^{2}\big{)} \big{|} \bar{z}^{k}, z^{k} \big{]} \label{I_negative_term2}
    \end{align}
    To be clear, \eqref{I_negative_term1} is I subtract \eqref{R_smooth_stoch}, and \eqref{I_negative_term2} is \eqref{I_negative_term1} with the terms introduced from \eqref{R_smooth_stoch} expanded.
    We rearrange \eqref{I_negative_term2} below to visualize cancellations and groupings of terms:
    \begin{align}
        \mathbb{E}\big{[} &A_{k}\|G(z^{k})\|^{2} -A_{k}\|G_{i_{k}^{1}}(z^{k})\|^{2} -A_{k+1}\|G(z^{k+1})\|^{2} + \frac{A_{k}}{R^{2}\alpha_{k}^{2}}\|G(z^{k+1})\|^{2} \notag \\
        +&(\frac{\alpha_{k}B_{k}}{\beta_{k}} + \alpha_{k}B_{k+1})\langle G_{i_{k}^{2}}(z^{k+1/2}),G(z^{k+1})\rangle - \frac{\alpha_{k}B_{k}}{\beta_{k}}\langle G_{i_{k}^{2}}(z^{k+1/2}),G(z^{k}) \rangle\notag \\
        -& A_{k}\|G_{i_{k}^{2}}(z^{k+1/2}))\|^{2} + \frac{A_{k}}{R^{2}\alpha_{k}^{2}}\|G(z^{k+1/2})\|^{2} + 2A_{k}\langle G_{i_{k}^{2}}(z^{k+1/2}),G_{i_{k}^{1}}(z^{k})\rangle \notag \\
        -&\frac{2A_{k}}{R^{2}\alpha_{k}^{2}}\langle G(z^{k+1/2}),G(z^{k+1}) \rangle\big{|} \bar{z}^{k}, z^{k}\big{]} \label{need_stoch_cancels} \\
        \geq \mathbb{E}\big{[} & (\frac{A_{k}}{R^{2}\alpha_{k}^{2}} - A_{k}K_{G}(z^{k+1/2}))\|G(z^{k+1/2})\|^{2} + (\frac{A_{k}}{R^{2}\alpha_{k}^{2}} - A_{k+1})\|G(z^{k+1})\|^{2} \notag \\
        +&2A_{k}\langle G_{i_{k}^{2}}(z^{k+1/2}), G_{i_{k}^{1}}(z^{k})\rangle - \frac{\alpha_{k}B_{k}}{\beta_{k}} \langle G_{i_{k}^{2}}(z^{k+1/2}), G(z^{k})\rangle \notag \\
        +&(\frac{\alpha_{k}B_{k}}{\beta_{k}} + \alpha_{k}B_{k+1})\langle G_{i_{k}^{2}}(z^{k+1/2}), G(z^{k+1})\rangle - \frac{2A_{k}}{R^{2}\alpha_{k}^{2}}\langle G(z^{k+1/2}), G(z^{k+1})\rangle \big{|} \bar{z}^{k}, z^{k}\big{]}, \label{stoch_some_cancels}
    \end{align}
    where the first two terms of \eqref{need_stoch_cancels} cancel by the law of iterated expectation applied to $\|G_{i_{k}^{1}}(z^{k})\|^{2}$ and the first term in \eqref{stoch_some_cancels} comes from applying \ref{condition2} to $- A_{k}\|G_{i_{k}^{2}}(z^{k+1/2}))\|^{2} + \frac{A_{k}}{R^{2}\alpha_{k}^{2}}\|G(z^{k+1/2})\|^{2}$.
    Now, we may apply the law of iterated expectation to modify the two terms in the second line of \eqref{stoch_some_cancels}:
    \begin{align}
        \mathbb{E}\big{[}& 2A_{k}\langle G_{i_{k}^{2}}(z^{k+1/2}), G_{i_{k}^{1}}(z^{k})\rangle - \frac{\alpha_{k}B_{k}}{\beta_{k}} \langle G_{i_{k}^{2}}(z^{k+1/2}), G(z^{k})\rangle \big{|} \bar{z}^{k},z^{k} \big{]} \notag \\
        =\mathbb{E}\big{[}& \mathbb{E}[ 2A_{k}\langle G_{i_{k}^{2}}(z^{k+1/2}), G_{i_{k}^{1}}(z^{k})\rangle - \frac{\alpha_{k}B_{k}}{\beta_{k}} \langle G_{i_{k}^{2}}(z^{k+1/2}), G(z^{k})\rangle |\bar{z}^{k}, z^{k}, i_{k}^{1}] \big{|} \bar{z}^{k}, z^{k}\big{]} \notag \\
        =\mathbb{E}\big{[}& 2A_{k}\langle G(z^{k+1/2}), G_{i_{k}^{1}}(z^{k})\rangle - \frac{\alpha_{k}B_{k}}{\beta_{k}} \langle G(z^{k+1/2}), G(z^{k})\rangle \big{|} \bar{z}^{k},z^{k} \big{]}.\label{application_of_law}
    \end{align}
    With observation \eqref{application_of_law} under our belts, we continue by introducing some terms at the tail end of an updated \eqref{stoch_some_cancels}:
    \begin{align}
        \mathbb{E}\big{[} & (\frac{A_{k}}{R^{2}\alpha_{k}^{2}} - A_{k}K_{G}(z^{k+1/2}))\|G(z^{k+1/2})\|^{2} + (\frac{A_{k}}{R^{2}\alpha_{k}^{2}} - A_{k+1})\|G(z^{k+1})\|^{2} \notag \\
        +&2A_{k}\langle G(z^{k+1/2}), G_{i_{k}^{1}}(z^{k})\rangle - \frac{\alpha_{k}B_{k}}{\beta_{k}} \langle G(z^{k+1/2}), G(z^{k})\rangle \notag \\
        +&(\frac{\alpha_{k}B_{k}}{\beta_{k}} + \alpha_{k}B_{k+1})\langle G_{i_{k}^{2}}(z^{k+1/2}), G(z^{k+1})\rangle - \frac{2A_{k}}{R^{2}\alpha_{k}^{2}}\langle G(z^{k+1/2}), G(z^{k+1})\rangle \notag \\
        +&2A_{k}\langle G(z^{k+1/2}),G(z^{k})\rangle -2A_{k}\langle G(z^{k+1/2}),G(z^{k})\rangle \notag \\
        +&(\frac{\alpha_{k}B_{k}}{\beta_{k}} + \alpha_{k}B_{k+1})\langle G(z^{k+1}),G(z^{k+1/2}) \rangle \notag \\
        -&(\frac{\alpha_{k}B_{k}}{\beta_{k}} + \alpha_{k}B_{k+1})\langle G(z^{k+1}),G(z^{k+1/2}) \rangle\big{|} \bar{z}^{k}, z^{k}\big{]} \notag \\
        \geq \mathbb{E}\big{[}& (\frac{A_{k}}{R^{2}\alpha_{k}^{2}} - A_{k}K_{G}(z^{k+1/2}))\|G(z^{k+1/2})\|^{2} + (\frac{A_{k}}{R^{2}\alpha_{k}^{2}} - A_{k+1})\|G(z^{k+1})\|^{2} 
        \notag\\
        +&(2A_{k} - \frac{\alpha_{k}B_{k}}{\beta_{k}})\langle G(z^{k+1/2}),G(z^{k}) \rangle \label{almost_to_young_inequality2} \\ 
        +&(\frac{\alpha_{k}B_{k}}{\beta_{k}} + \alpha_{k}B_{k+1} - \frac{2A_{k}}{R^{2}\alpha_{k}^{2}})\langle G(z^{k+1}),G(z^{k+1/2}) \rangle 
        \notag\\
        +& 2A_{k} \langle G(z^{k+1/2}),G_{i_{1}^{k}}(z^{k}) - G(z^{k}) \rangle \label{almost_to_young_inequality4} \\
        +&(\frac{\alpha_{k}B_{k}}{\beta_{k}} +\alpha_{k}B_{k+1}) \langle G(z^{k+1}),G_{i_{k}^{2}}(z^{k+1/2}) - G(z^{k+1/2}) \rangle \big{|} \bar{z}^{k}, z^{k} \big{]} \label{almost_to_young_inequality5}
    \end{align}
    Let us momentarily ignore the latter two terms \eqref{almost_to_young_inequality4}, \eqref{almost_to_young_inequality5}.
    Note \eqref{almost_to_young_inequality2} is zero by the definition of $A_{k}$, and for the other coefficients,
    \begin{align}
        \frac{A_{k}}{R^{2}\alpha_{k}^{2}} - A_{k}K_{G}(z^{k+1/2}) & = \frac{A_{k}(1 - \alpha_{k}^{2}R^{2}K_{G}(z^{k+1/2}))}{\alpha_{k}^{2}R^{2}} \label{gzk_plus_half_coeff} \\
        A_{k+1} & = \frac{A_{k}(1 - \alpha_{k}^{2}R^{2} - \beta_{k}^{2})}{(1 - \alpha_{k}^{2}R^{2})(1 - \beta_{k})^{2}} \notag \\
        \implies \frac{A_{k}}{\alpha_{k}^{2}R^{2}} - A_{k+1} & = \frac{A_{k}(1 - \alpha_{k}^{2}R^{2} - \beta_{k})^{2}}{\alpha_{k}^{2}R^{2}(1 - \alpha_{k}^{2}R^{2})(1 - \beta_{k})^{2}} \label{gzk_plus_one_coeff} \\
        \alpha_{k}B_{k+1} + \frac{\alpha_{k}B_{k}}{\beta_{k}} -\frac{2A_{k}}{\alpha_{k}^{2}R^{2}} & = \frac{2A_{k}}{1 - \beta_{k}} -\frac{2A_{k}}{\alpha_{k}^{2}R^{2}} \notag \\
        & = - \frac{2A_{k}(1 - \alpha_{k}^{2}R^{2} - \beta_{k})}{\alpha_{k}^{2}R^{2}(1 - \beta_{k})} \label{cross_term_coeff}
    \end{align}
    which, if we continue ignoring \eqref{almost_to_young_inequality4} and \eqref{almost_to_young_inequality5} while substituting in \eqref{gzk_plus_half_coeff}, \eqref{gzk_plus_one_coeff}, and \eqref{cross_term_coeff}, yields
    
    \begin{align}
        V_{k}& - V_{k+1} \geq \notag \\
        \mathbb{E}&[\text{I}|\bar{z}^{k},z^{k}] \geq \notag \\
        \mathbb{E}&\big{[}\big{(} \frac{A_{k}(1 - \alpha_{k}^{2}R^{2}K_{G}(z^{k+1/2}))}{\alpha_{k}^{2}R^{2}}\big{)}\|G(z^{k + 1/2})\|^{2} \notag \\
        &+ \big{(} \frac{A_{k}(1 - \alpha_{k}^{2}R^{2} - \beta_{k})^{2}}{\alpha_{k}^{2}R^{2}(1 - \alpha_{k}^{2}R^{2})(1 - \beta_{k})^{2}}\big{)}\|G(z^{k+1})\|^{2} \label{two_squared_terms}\\
        -&\frac{2A_{k}(1 - \alpha_{k}^{2}R^{2} -\beta_{k})}{\alpha_{k}^{2}R^{2}(1 - \beta_{k})}\langle G(z^{k+1}),G(z^{k+1/2}) \rangle \big{|} \bar{z}^{k}, z^{k} \big{]}. \label{full_cross_term}
    \end{align}
    
    Our aim is to complete the square via Young's inequality to demonstrate the nonnegativity of these terms.
    A slight complicating factor exists in the extra $K_{G}(z^{k+1/2})$ in a coefficient within \eqref{two_squared_terms}, which we deal with in the following way.
    \begin{align}
        \big{(}& \frac{A_{k}(1 - \alpha_{k}^{2}R^{2}K_{G}(z^{k+1/2}))}{\alpha_{k}^{2}R^{2}}\big{)}\|G(z^{k + 1/2})\|^{2} + \big{(} \frac{A_{k}(1 - \alpha_{k}^{2}R^{2} - \beta_{k})^{2}}{\alpha_{k}^{2}R^{2}(1 - \alpha_{k}^{2}R^{2})(1 - \beta_{k})^{2}}\big{)}\|G(z^{k+1})\|^{2} \notag \\
        -&\frac{2A_{k}(1 - \alpha_{k}^{2}R^{2} -\beta_{k})}{\alpha_{k}^{2}R^{2}(1 - \beta_{k})}\langle G(z^{k+1}),G(z^{k+1/2}) \rangle \notag \\
        =&\big{(} 1 - \alpha_{k}^{2}R^{2}K_{G}(z^{k+1/2})\big{)}\|G(z^{k + 1/2})\|^{2} + \frac{(1 - \alpha_{k}^{2}R^{2} - \beta_{k})^{2}}{(1 - \alpha_{k}^{2}R^{2})(1 - \beta_{k})^{2}}\|G(z^{k+1})\|^{2} \notag \\
        -&\frac{2(1 - \alpha_{k}^{2}R^{2} -\beta_{k})}{(1 - \beta_{k})}\langle G(z^{k+1}),G(z^{k+1/2}) \rangle \label{lose_the_coefficient} \\
        =& \|\sqrt{1 - \alpha_{k}^{2}R^{2}K_{G}(z^{k+1/2})}G(z^{k + 1/2})\|^{2} + \frac{(1 - \alpha_{k}^{2}R^{2} - \beta_{k})^{2}}{(1 - \alpha_{k}^{2}R^{2})}\Big{\|}\frac{G(z^{k+1})}{(1 - \beta_{k})}\Big{\|}^{2} \notag \\
        -&2\Big{\langle} \frac{(1 - \alpha_{k}^{2}R^{2} - \beta_{k})}{(1 - \beta_{k})\sqrt{1 - \alpha_{k}^{2}R^{2}K_{G}(z^{k+1/2})}}G(z^{k+1}),\sqrt{1 - \alpha_{k}^{2}R^{2}K_{G}(z^{k+1/2})}G(z^{k+1/2}) \Big{\rangle} \notag \\
        +&(1 - \alpha_{k}^{2}R^{2} - \beta_{k})^{2}\Big{\|}\frac{G(z^{k+1})}{(1 - \beta_{k})\sqrt{1 - \alpha_{k}^{2}R^{2}K_{G}(z^{k+1/2})}}\Big{\|}^{2} \notag \\
        -&(1 - \alpha_{k}^{2}R^{2} - \beta_{k})^{2}\Big{\|}\frac{G(z^{k+1})}{(1 - \beta_{k})\sqrt{1 - \alpha_{k}^{2}R^{2}K_{G}(z^{k+1/2})}}\Big{\|}^{2} \notag \\
    \end{align}
\begin{align}
        \geq & \frac{(1 - \alpha_{k}^{2}R^{2} - \beta_{k})^{2}}{(1 - \alpha_{k}^{2}R^{2})}\Big{\|}\frac{G(z^{k+1})}{(1 - \beta_{k})}\Big{\|}^{2} - \frac{(1 - \alpha_{k}^{2}R^{2} - \beta_{k})^{2}}{(1 - \alpha_{k}^{2}R^{2}K_{G}(z^{k+1/2}))}\Big{\|} \frac{G(z^{k+1})}{(1 - \beta_{k})} \Big{\|}^{2} \label{app_of_YE} \\
        =& (1 - \alpha_{k}^{2}R^{2} - \beta_{k})\Big{\|} \frac{G(z^{k+1})}{(1 - \beta_{k})} \Big{\|}^{2}\bigg{(} \frac{(1 - \alpha_{k}^{2}R^{2} - \beta_{k})}{(1 - \alpha_{k}^{2}R^{2})} - \frac{(1 - \alpha_{k}^{2}R^{2} - \beta_{k})}{(1 - \alpha_{k}^{2}R^{2}K_{G}(z^{k+1/2}))} \bigg{)}\label{back_to_funny_alpha_lemma} \\
        =& (1 - \alpha_{k}^{2}R^{2} - \beta_{k})\Big{\|} \frac{G(z^{k+1})}{(1 - \beta_{k})} \Big{\|}^{2} \bigg{(} 1 - \frac{(1 - \alpha_{k}^{2}R^{2})}{(1 - \alpha_{k}^{2}R^{2}K_{G}(z^{k+1/2}))} \notag\\
        &+ \beta_{k} \Big{(}\frac{1}{(1 - \alpha_{k}^{2}R^{2}K_{G}(z^{k+1/2}))} - \frac{1}{(1 - \alpha_{k}^{2}R^{2})} \Big{)}\bigg{)} \notag \\
        =& (1 - \alpha_{k}^{2}R^{2} - \beta_{k})\Big{\|} \frac{G(z^{k+1})}{(1 - \beta_{k})} \Big{\|}^{2} \bigg{(} \frac{\alpha_{k}^{2}R^{2}(1 - K_{G}(z^{k+1/2}))}{(1 - \alpha_{k}^{2}R^{2}K_{G}(z^{k+1/2}))} \notag \\
        &+ \frac{\alpha_{k}^{2}R^{2}\beta_{k}(K_{G}(z^{k+1/2}) - 1)}{(1 - \alpha_{k}^{2}R^{2}K_{G}(z^{k+1/2}))(1 - \alpha_{k}^{2}R^{2})} \bigg{)} \notag \\
        >& (1 - \alpha_{k}^{2}R^{2} - \beta_{k})\Big{\|} \frac{G(z^{k+1})}{(1 - \beta_{k})} \Big{\|}^{2} \bigg{(} \frac{\alpha_{k}^{2}R^{2}\beta_{k}(1 - K_{G}(z^{k+1/2}))}{(1 - \alpha_{k}^{2}R^{2}K_{G}(z^{k+1/2}))} \notag \\
        &+ \frac{\alpha_{k}^{2}R^{2}\beta_{k}(K_{G}(z^{k+1/2}) - 1)}{(1 - \alpha_{k}^{2}R^{2}K_{G}(z^{k+1/2}))(1 - \alpha_{k}^{2}R^{2})} \bigg{)} \label{second_to_last_ineq} \\
        >& (1 - \alpha_{k}^{2}R^{2} - \beta_{k})\Big{\|} \frac{G(z^{k+1})}{(1 - \beta_{k})} \Big{\|}^{2} \bigg{(} \frac{\alpha_{k}^{2}R^{2}\beta_{k}(1 - K_{G}(z^{k+1/2}))}{(1 - \alpha_{k}^{2}R^{2}K_{G}(z^{k+1/2}))} \notag \\
        &+ \frac{\alpha_{k}^{2}R^{2}\beta_{k}(K_{G}(z^{k+1/2}) - 1)}{(1 - \alpha_{k}^{2}R^{2}K_{G}(z^{k+1/2}))} \bigg{)} \label{final_ineq} \\
        =& 0, \notag 
    \end{align}
   and we note \eqref{second_to_last_ineq} and \eqref{final_ineq} are due to $0 < \beta_{k} < 1$ and $\displaystyle 1 < \frac{1}{(1 - \alpha_{k}^{2}R^{2})},$ respectively.
    For clarity, the term $(1 - \alpha_{k}^{2}R^{2} - \beta_{k})$ is positive because of the starting point of $\alpha_{0}$ in \cref{funny_alpha_lemma}, so there are no issues with bringing this factor to the front of the expression for our analysis.
    Bringing back our other terms, this demonstrates that
    \begin{align}
        V_{k}& - V_{k+1} \geq \notag \\
        \mathbb{E}&[\text{I}|\bar{z}^{k},z^{k}] \geq \notag \\
        +& 2A_{k} \langle G(z^{k+1/2}),G_{i_{1}^{k}}(z^{k}) - G(z^{k}) \rangle \label{finally_1} \\
        +&(\frac{\alpha_{k}B_{k}}{\beta_{k}} +\alpha_{k}B_{k+1}) \langle G(z^{k+1}),G_{i_{k}^{2}}(z^{k+1/2}) - G(z^{k+1/2}) \rangle \big{|} \bar{z}^{k}, z^{k} \big{]} \label{finally_2} \\
        =+& 2A_{k} \langle G(z^{k+1/2}),G_{i_{1}^{k}}(z^{k}) - G(z^{k}) \rangle \notag \\
        +&\frac{2A_{k}}{1 - \beta_{k}} \langle G(z^{k+1}),G_{i_{k}^{2}}(z^{k+1/2}) - G(z^{k+1/2}) \rangle \big{|} \bar{z}^{k}, z^{k} \big{]} \notag 
    \end{align}
    For \eqref{finally_1}, we note that
    \begin{equation}
        0 = \langle -G\big{(}z^{k} + \beta_{k}(\bar{z}^{k} - z^{k}) - \alpha_{k}G(z^{k})\big{)}, \mathbb{E}[G_{i_{k}^{1}}(z^{k}) - G(z^{k})|\bar{z}^{k},z^{k}]\rangle, \label{first_zero_thing}
    \end{equation}
    which allows us to compute
    \begin{align}
        \big{|}&\mathbb{E}[ \langle G(z^{k+1/2}), G_{i_{k}^{1}}(z^{k}) - G(z^{k})\rangle |\bar{z}^{k},z^{k}]\big{|} \notag \\
        =&\big{|}\mathbb{E}[ \langle G(z^{k+1/2}) - G\big{(}z^{k} + \beta_{k}(\bar{z}^{k} - z^{k}) - \alpha_{k}G(z^{k})\big{)}, G_{i_{k}^{1}}(z^{k}) - G(z^{k})\rangle |\bar{z}^{k},z^{k}]\big{|} \notag \\
        \leq&\mathbb{E}\big{[} \big{\|}G(z^{k+1/2}) - G\big{(}z^{k} + \beta_{k}(\bar{z}^{k} - z^{k}) - \alpha_{k}G(z^{k})\big{)}\big{\|} \cdot \big{\|}G_{i_{k}^{1}}(z^{k}) - G(z^{k})\big{\|} | |\bar{z}^{k},z^{k}\big{]} \notag \\
        \leq&\mathbb{E}\big{[} R\big{\|}z^{k+1/2} - \big{(}z^{k} + \beta_{k}(\bar{z}^{k} - z^{k}) - \alpha_{k}G(z^{k})\big{)}\big{\|} \cdot \big{\|}G_{i_{k}^{1}}(z^{k}) - G(z^{k})\big{\|} | |\bar{z}^{k},z^{k}\big{]} \notag \\
        =&\mathbb{E}[R\alpha_{k}\|G_{i_{k}^{1}}(z^{k}) - G(z^{k})\|^{2}|\bar{z}^{k},z^{k}] \notag \\
        =&R\alpha_{k} Var(z^{k}), \label{variance_term_1}
    \end{align}
    via an application of Cauchy-Schwartz, the Lipschitz property, and the definition of the algorithm, where $Var(z) := \mathbb{E}[\langle G_{i}(z) - G(z), G_{i}(z) - G(z)\rangle | z]$ is the variance of the difference.
    Similarly, for \eqref{finally_2} we obtain
    \begin{align}
        \big{|}&\mathbb{E}[ \langle G(z^{k+1}), G_{i_{k}^{2}}(z^{k+1/2}) - G(z^{k+1/2})\rangle |\bar{z}^{k},z^{k}]\big{|} \notag \\
        =&\big{|}\mathbb{E}[ \langle G(z^{k+1}) - G\big{(}z^{k} + \beta_{k}(\bar{z}^{k} - z^{k}) - \alpha_{k}G(z^{k+1/2})\big{)}, G_{i_{k}^{2}}(z^{k+1/2}) - G(z^{k+1/2})\rangle |\bar{z}^{k},z^{k}]\big{|} \notag \\
        \leq&\mathbb{E}\big{[} \big{\|}G(z^{k+1}) - G\big{(}z^{k} + \beta_{k}(\bar{z}^{k} - z^{k}) - \alpha_{k}G(z^{k+1/2})\big{)}\big{\|} \cdot \big{\|}G_{i_{k}^{2}}(z^{k+1/2}) - G(z^{k+1/2})\big{\|}  |\bar{z}^{k},z^{k}\big{]} \notag \\
        \leq&\mathbb{E}\big{[} R\big{\|}z^{k+1} - \big{(}z^{k} + \beta_{k}(\bar{z}^{k} - z^{k}) - \alpha_{k}G(z^{k+1/2})\big{)}\big{\|} \cdot \big{\|}G_{i_{k}^{2}}(z^{k+1/2}) - G(z^{k+1/2})\big{\|}  |\bar{z}^{k},z^{k}\big{]} \notag \\
        =&\mathbb{E}\Big{[} \mathbb{E} \big{[} R\big{\|}z^{k+1} - \big{(}z^{k} + \beta_{k}(\bar{z}^{k} - z^{k}) - \alpha_{k}G(z^{k+1/2})\big{)}\big{\|} \notag \\
        & \cdot \big{\|}G_{i_{k}^{2}}(z^{k+1/2}) - G(z^{k+1/2}) \big{\|} | z^{k+1/2},\bar{z}^{k}, z^{k} \big{]} |\bar{z}^{k},z^{k}\Big{]} \notag \\
        =&\mathbb{E}[R\alpha_{k}\|G_{i_{k}^{2}}(z^{k+1/2}) - G(z^{k+1/2})\|^{2}|z^{k+1/2},\bar{z}^{k},z^{k}] \notag \\
        =&R\alpha_{k}\mathbb{E}[Var(z^{k+1/2})|\bar{z}^{k},z^{k}] \label{variance_term_2}
    \end{align}
    by the law of iterated expectation and reasoning similar to that of \eqref{finally_1}.
    Equipped with \eqref{variance_term_1} and \eqref{variance_term_2}, we get
    \begin{equation}
        V_{k} - V_{k+1} \geq -2A_{k}\alpha_{k}RVar(z^{k}) - \frac{2A_{k}}{1 - \beta_{k}}\alpha_{k}R\mathbb{E}[Var(z^{k+1/2})|\bar{z}^{k},z^{k}] \label{difference_bound}
    \end{equation}
    and the lemma is proved.
\end{proof}

To proceed towards convergence, we need the following.

\begin{definition}
    The \textit{filtration}
    \[\mathcal{F}^{k} = \sigma(z^{0}, \bar{z}^{0}, z^{1}, \bar{z}^{1}, \ldots, z^{k}, \bar{z}^{k}, i_{1}^{0}, i_{2}^{0}, i_{3}^{0}, \ldots, i_{1}^{k}, i_{2}^{k}, i_{3}^{k}) \]
    represents the history of of iterates, anchors, and choices of component $i$ up through the current step $k$.
\end{definition}

\begin{theorem}[Supermartingale Convergence Theorem \cite{combettes2015stochastic}]\label{smct}
    Let $P^{k}, J^{k},$ and $W^{k}$ be positive sequences adapted to $\mathcal{F}^{k},$ and suppose $W^{k}$ is summable with probability $1.$
    If
    \[\mathbb{E}[P^{k+1} | \mathcal{F}^{k}] + J^{k} \leq P^{k} + W^{k},\]
    then with probability $1$, $P^{k}$ converges to a $[0,\infty)-$valued random variable and $\sum_{j=1}^{\infty} J^{k} < \infty$.
\end{theorem}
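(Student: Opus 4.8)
The plan is to deduce this from the convergence theorem for nonnegative supermartingales, following the classical Robbins--Siegmund argument (which is essentially the route of \cite{combettes2015stochastic}).

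\emph{Step 1 (an auxiliary supermartingale).} I would set $U^{k} := P^{k} + \sum_{j=0}^{k-1}(J^{j} - W^{j})$, with $U^{0} := P^{0}$. Since $J^{j},W^{j}$ are $\mathcal{F}^{j}$-measurable, the partial sums are $\mathcal{F}^{k}$-measurable, and the hypothesis $\mathbb{E}[P^{k+1}\mid\mathcal{F}^{k}] \le P^{k} + W^{k} - J^{k}$ yields $\mathbb{E}[U^{k+1}\mid\mathcal{F}^{k}] = \mathbb{E}[P^{k+1}\mid\mathcal{F}^{k}] + \sum_{j=0}^{k}(J^{j}-W^{j}) \le P^{k} + \sum_{j=0}^{k-1}(J^{j}-W^{j}) = U^{k}$, so $(U^{k})$ is an $(\mathcal{F}^{k})$-supermartingale. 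If the $P^{k}$ are not assumed integrable, one first replaces $P^{k}$ by $P^{k}\wedge M$ and passes to the limit $M\to\infty$ at the end; I will suppress this truncation.

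\emph{Step 2 (localization).} The difficulty is that $U^{k}\ge P^{k} - \sum_{j\ge 0}W^{j}$ is bounded below only by the random quantity $-\sum_{j}W^{j}$, which is a.s.\ finite by hypothesis but not $\mathcal{F}^{k}$-measurable, so one cannot invoke the supermartingale convergence theorem directly. Instead, for each integer $m\ge1$ I introduce the stopping time $\tau_{m} := \inf\{k\ge0 : \sum_{j=0}^{k}W^{j} > m\}$, with $\inf\emptyset = \infty$. Then $\sum_{j=0}^{(k\wedge\tau_{m})-1}W^{j}\le m$ for every $k$, so, using $P^{k},J^{k}\ge0$, the stopped process satisfies $U^{k\wedge\tau_{m}} \ge -m$; and $(U^{k\wedge\tau_{m}})_{k}$ is again a supermartingale. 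Hence $U^{k\wedge\tau_{m}}+m$ is a nonnegative supermartingale, so it converges a.s.\ to a finite limit.

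\emph{Step 3 (remove the localization and read off the claims).} Because $\sum_{j}W^{j}<\infty$ a.s., the events $\Omega_{m}:=\{\tau_{m}=\infty\}$ increase to a set of full probability, and on $\Omega_{m}$ one has $U^{k\wedge\tau_{m}}=U^{k}$ for all $k$; therefore $U^{k}$ converges a.s.\ to a finite limit. Writing $U^{k}=\bigl(P^{k}+\sum_{j=0}^{k-1}J^{j}\bigr)-\sum_{j=0}^{k-1}W^{j}$ and using that $\sum_{j}W^{j}$ converges, the quantity $P^{k}+\sum_{j=0}^{k-1}J^{j}$ converges a.s.; since $\sum_{j=0}^{k-1}J^{j}$ is nondecreasing and $P^{k}\ge0$, boundedness of their sum forces $\sum_{j=0}^{\infty}J^{j}<\infty$, and then $P^{k}$ converges a.s.\ to a finite, necessarily $[0,\infty)$-valued, limit. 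This holds on $\bigcup_{m}\Omega_{m}$, i.e.\ a.s., which is the assertion.

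The main obstacle is exactly what Step 2 repairs: the natural Lyapunov-type supermartingale $U^{k}$ is neither $L^{1}$-bounded nor bounded below by a deterministic constant, precisely because the ``error budget'' $\sum_{j}W^{j}$ is only almost surely finite; the stopping-time truncation $\tau_{m}$, together with the facts that a stopped supermartingale is a supermartingale and that $\tau_{m}\uparrow\infty$ a.s., is what allows the passage through the nonnegative case. The remaining point, arranging integrability of the $P^{k}$, is routine and handled by the $P^{k}\wedge M$ truncation in Step 1 followed by monotone convergence.
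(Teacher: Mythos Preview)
Your argument is the classical Robbins--Siegmund proof and is correct; the stopping-time localization in Step~2 is exactly the right device to repair the lack of a deterministic lower bound on $U^{k}$, and Step~3 extracts both conclusions cleanly. One small point: for the optional-stopping step you implicitly use that $U^{0}$ (equivalently $P^{0}$) is integrable so that the stopped, shifted process is a genuine nonnegative supermartingale; your remark about the $P^{k}\wedge M$ truncation covers this, but it is worth saying explicitly since the statement as written only assumes positivity and adaptedness.

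As for comparison with the paper: there is nothing to compare. The paper does not prove \cref{smct}; it is quoted verbatim from \cite{combettes2015stochastic} as a black-box tool and then applied (with $P^{k}=V_{k}$, $J^{k}=0$, and $W^{k}$ the variance terms) to obtain almost-sure convergence of the Lyapunov functional. Your write-up therefore supplies a self-contained proof where the paper defers to the literature, and the argument you give is essentially the one found in the cited reference.
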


Now, we may apply \cref{summability_condition} to satisfy the conditions of \cref{smct}.

\begin{lemma}[Summability of Variances] \label{summability_lemma}
    Consider the stochastic Lyapunov functional $V_{k}$ \eqref{stoch_fxnal} discussed in \cref{stochastic_descent_lemma} along with conditions \eqref{condition1}, \eqref{condition2}, \eqref{condition3}, the choice $\beta_{k} = \frac{1}{k+2}$ made in \cref{Lpnv_RMK}, and \cref{summability_condition}.
    Given \eqref{difference_bound}, the extraneous sequence of terms
    \[
        2A_{k}\alpha_{k}R\Big{(} Var(z^{k}) + \frac{\mathbb{E}[Var(z^{k+1/2})|\bar{z}^{k},z^{k}]}{1 - \beta_{k}} \Big{)}
    \]
    is summable.
\end{lemma}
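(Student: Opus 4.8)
The plan is to convert the abstract estimates of \cref{summability_condition} into an explicit $O(k^{-2})$ pointwise bound on the $k$-th term of the series and then to conclude by comparison with a convergent $p$-series.

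First I would record the consequences of the choice $\beta_{k} = \frac{1}{k+2}$ fixed in \cref{Lpnv_RMK}: one has $B_{k} = k+1$, $A_{k} = \frac{\alpha_{k}(k+1)(k+2)}{2}$, and $1 - \beta_{k} = \frac{k+1}{k+2} \geq \frac{1}{2}$ for every $k \geq 0$, so $\frac{1}{1-\beta_{k}} \leq 2$. By \cref{funny_alpha_lemma} the sequence $\{\alpha_{k}\}$ decreases monotonically, hence $\alpha_{k} \leq \alpha_{0}$ for all $k$. Therefore
\[
2A_{k}\alpha_{k}R = \alpha_{k}^{2}R(k+1)(k+2) \leq \alpha_{0}^{2}R(k+1)(k+2).
\]
Next I would bound the variance terms using the a priori estimate \eqref{var_bound}, which gives $Var(z^{k}) \leq (\overline{C_{G}}(z^{k})-1)\|G(z^{k})\|^{2}$ and, applied at $z^{k+1/2}$ inside the conditional expectation, $\mathbb{E}[Var(z^{k+1/2})|\bar{z}^{k},z^{k}] \leq \mathbb{E}[(\overline{C_{G}}(z^{k+1/2})-1)\|G(z^{k+1/2})\|^{2}|\bar{z}^{k},z^{k}]$. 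Invoking \eqref{summ_cond_1} and \eqref{summ_cond_2} of \cref{summability_condition} then yields, almost surely,
\[
Var(z^{k}) \leq \frac{C_{1}}{(k+1)^{4}}, \qquad \mathbb{E}[Var(z^{k+1/2})|\bar{z}^{k},z^{k}] \leq \frac{C_{2}}{(k+1)^{4}}.
\]

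Combining the two displays, the $k$-th term of the series satisfies, almost surely,
\begin{align*}
2A_{k}\alpha_{k}R\Big(Var(z^{k}) + \frac{\mathbb{E}[Var(z^{k+1/2})|\bar{z}^{k},z^{k}]}{1-\beta_{k}}\Big) &\leq \frac{\alpha_{0}^{2}R(k+1)(k+2)(C_{1}+2C_{2})}{(k+1)^{4}} \\
&\leq \frac{2\alpha_{0}^{2}R(C_{1}+2C_{2})}{(k+1)^{2}},
\end{align*}
where the last step uses $k+2 \leq 2(k+1)$. Summing over $k$ gives a finite bound since $\sum_{k\geq 0}(k+1)^{-2} < \infty$; because the bound is deterministic, summability holds with probability $1$, which is precisely the form in which it is needed to take $W^{k}$ equal to (a constant multiple of) this term when applying \cref{smct}.

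The step carrying the real content is \cref{summability_condition} itself, which was engineered exactly so that the $(k+1)^{-4}$ decay of the variance terms outpaces the $O(k^{2})$ growth of $2A_{k}\alpha_{k}R$; beyond that, the proof is a routine comparison. The only points to handle with care are that one must pass from $Var$ to $(\overline{C_{G}}-1)\|G\|^{2}$ via \eqref{var_bound} before applying the condition, and that the half-step estimate must be kept conditional on $(\bar{z}^{k},z^{k})$ exactly as in \eqref{summ_cond_2}, so that it dovetails with the form of the difference bound \eqref{difference_bound}.
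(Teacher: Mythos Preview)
Your proof is correct and follows essentially the same approach as the paper: reduce the variance terms via \eqref{var_bound} and \cref{summability_condition} to $O((k+1)^{-4})$, absorb the $O(k^{2})$ growth of $2A_{k}\alpha_{k}R$ using $\alpha_{k}\leq\alpha_{0}$ and $A_{k}=\tfrac{\alpha_{k}(k+1)(k+2)}{2}$, and conclude by comparison with $\sum (k+1)^{-2}$. The only cosmetic difference is in the bookkeeping of constants (you obtain $C_{1}+2C_{2}$ by applying $\tfrac{1}{1-\beta_{k}}\leq 2$ to the $C_{2}$ term alone, whereas the paper first pulls a common factor of $\tfrac{1}{1-\beta_{k}}$ and arrives at $2(C_{1}+C_{2})$), which is immaterial.
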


\begin{proof}
    Because of \eqref{var_bound} in \eqref{condition2} and \cref{summability_condition}, it is sufficient to demonstrate that
    \begin{equation}
        \sum_{k=0}^{\infty} 2A_{k}\alpha_{k}R\Big{(}(C_{G}(z^{k}) - 1)\|G(z^{k})\|^{2} + \frac{1}{1-\beta_{k}}\mathbb{E}[\big{(}\overline{C_{G}}(z^{k+1/2})-1\big{)}\|G(z^{k+1/2})\|^{2}|\bar{z}^{k},z^{k}]\Big{)}
    \end{equation}
    is finite.
    By construction, $\alpha_{k}$ is a nonnegative term bounded above by the choice $\alpha_{0}$, so the following bound for each summand exists if we substitute in the definition of $A_{k}$ made in \cref{Lpnv_RMK}:
    \begin{align}
        &\alpha_{0}^{2}R(k+1)(k+2)\Big{(}(C_{G}(z^{k}) - 1)\|G(z^{k})\|^{2} \notag \\
        & + \frac{1}{1-\beta_{k}}\mathbb{E}[\big{(}\overline{C_{G}}(z^{k+1/2})-1\big{)}\|G(z^{k+1/2})\|^{2}|\bar{z}^{k},z^{k}]\Big{)} \notag \\
        \leq&\alpha_{0}^{2}R(k+1)(k+2)\Big{(}\frac{C_{1}}{(k+1)^{4}} + \frac{C_{2}}{(k+1)^{4}(1 - \beta_{k})}\Big{)} \label{we_love_catching_bounds1} \\
        =&\frac{\alpha_{0}^{2}R(k+1)(k+2)}{1-\beta_{k}}\Big{(}\frac{C_{1}(1 - \beta_{k}) + C_{2}}{(k+1)^{4}}\Big{)} \label{we_love_catching_bounds2} \\
        <&2\alpha_{0}^{2}R(k+1)(k+2)\Big{(}\frac{C_{1} + C_{2}}{(k+1)^{4}}\Big{)} \label{we_love_catching_bounds3}
    \end{align}
    where \eqref{we_love_catching_bounds1} results from \cref{summability_condition}, \eqref{we_love_catching_bounds2} results from rationalizing the denominator with $(1-\beta_{k})$, and the last inequality \eqref{we_love_catching_bounds3} is a result of the facts that $1 - \beta_{k} < 1, \frac{1}{1 - \beta_{k}} < 2$.
    For the summation, these facts result in
    \begin{align*}
        &\sum_{k=0}^{\infty} 2A_{k}\alpha_{k}R\Big{(}(C_{G}(z^{k}) - 1)\|G(z^{k})\|^{2} + \frac{1}{1-\beta_{k}}\mathbb{E}[\big{(}\overline{C_{G}}(z^{k+1/2})-1\big{)}\|G(z^{k+1/2})\|^{2}|\bar{z}^{k},z^{k}]\Big{)} \\
        <& 2\alpha_{0}^{2}R(C_{1} + C_{2})\sum_{k=0}^{\infty} \frac{k^{2} + 3k + 2}{(k+1)^{4}}\\
        =& 2\alpha_{0}^{2}R(C_{1} + C_{2})\sum_{k=0}^{\infty} \frac{k^{2}}{k^{4}} + \text{ other terms more summable than } \frac{1}{k^{2}} < \infty.
    \end{align*}
\end{proof}

\begin{theorem}[Stochastic Lyapunov Functional Convergence]
    Consider the stochastic Lyapunov functional $V_{k}$ \eqref{stoch_fxnal} along with conditions \eqref{condition1}, \eqref{condition2}, \eqref{condition3}, the choice $\beta_{k} = \frac{1}{k+2}$ made in \cref{Lpnv_RMK}, and \cref{summability_condition}.
    Then with probability $1$, $V_{k}$ converges to a nonnegative, finite-valued random variable.  
\end{theorem}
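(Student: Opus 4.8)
The idea is to read \eqref{difference_bound} as a supermartingale inequality and feed it into \cref{smct}. I would take $P^k := V_k$, $J^k := 0$, and
\[
W^k := 2A_k\alpha_k R\,Var(z^k) \;+\; \frac{2A_k\alpha_k R}{1-\beta_k}\,\mathbb{E}\!\left[Var(z^{k+1/2})\mid \bar z^k, z^k\right],
\]
the nonnegative noise term on the right of \eqref{difference_bound}. Rearranging \cref{stochastic_descent_lemma} gives $\mathbb{E}[V_{k+1}\mid \mathcal F^k] \le V_k + W^k$ — the conditioning there is on $(\bar z^k,z^k)$, which amounts to conditioning on $\mathcal F^k$ since the step-$k$ samples $i_k^1,i_k^2,i_k^3$ are independent of the history through $(z^k,\bar z^k)$. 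Each of $Var(z^k)$ and $\mathbb{E}[Var(z^{k+1/2})\mid\bar z^k,z^k]$ is a function of $(z^k,\bar z^k)$, so $P^k$, $J^k$, $W^k$ are $\mathcal F^k$-adapted; and $\sum_k W^k < \infty$ with probability $1$ is precisely \cref{summability_lemma} — in fact the bound obtained there is deterministic, dominating each summand by a fixed multiple of $(k^2+3k+2)/(k+1)^4$, so the series is finite surely.

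The step I expect to be the real obstacle is checking the remaining hypothesis of \cref{smct}, namely that $P^k = V_k \ge 0$, because the cross term $B_k\langle G(z^k), z^k-\bar z^k\rangle$ in \eqref{stoch_fxnal} is sign-indefinite. My plan is to exploit monotonicity of $G$ together with $G(z^*)=0$: since $\langle G(z^k), z^k-z^*\rangle = \langle G(z^k)-G(z^*), z^k-z^*\rangle \ge 0$,
\[
V_k \;\ge\; A_k\|G(z^k)\|^2 + B_k\langle G(z^k), z^*-\bar z^k\rangle + c_k\|z^*-\bar z^k\|^2 \;=\; A_k\Big\|G(z^k)+\tfrac{B_k}{2A_k}(z^*-\bar z^k)\Big\|^2 + \Big(c_k-\tfrac{B_k^2}{4A_k}\Big)\|z^*-\bar z^k\|^2,
\]
which is nonnegative as soon as $4A_k c_k \ge B_k^2$. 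For $\beta_k = 1/(k+2)$, \cref{Lpnv_RMK} gives $A_k = \alpha_k(k+1)(k+2)/2$ and $B_k = k+1$, so the requirement becomes $\alpha_k c_k \ge \frac{k+1}{2(k+2)}$; since $\{\alpha_k\}$ decreases to $\alpha_\infty$ by \cref{funny_alpha_lemma} and $\{c_k\}$ decreases to $c_\infty$ (as $\delta_k>0$ in \eqref{c_update_1}), this holds for every $k$ under the standing normalization $\alpha_\infty c_\infty \ge 1$, which already forces $\alpha_k c_k \ge 1 > \tfrac12 > \frac{k+1}{2(k+2)}$. Thus $V_k\ge 0$, and \cref{smct} applies to yield the claim: with probability $1$, $V_k$ converges to a $[0,\infty)$-valued random variable.

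A caveat on what the argument does and does not deliver. Because \cref{stochastic_descent_lemma} was recorded after discarding the nonnegative square terms that its proof shows to vanish, taking $J^k=0$ renders the conclusion $\sum_k J^k<\infty$ of \cref{smct} vacuous here. Retaining those terms instead would produce a strictly positive $J^k$ — a combination of $\|G(z^{k+1/2})\|^2$- and $\|G(z^{k+1})\|^2$-type quantities — whose summability, combined with the convergence of $V_k$ just established, would be the natural route to an almost-sure $O(1/k^2)$ rate on $\|G(z^k)\|^2$; that refinement I would treat in a separate statement.
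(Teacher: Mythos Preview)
Your proposal is correct and matches the paper's approach exactly: the paper's proof is a single sentence applying \cref{smct} with precisely your choices $P^k = V_k$, $J^k = 0$, and $W^k$ the variance term, invoking \cref{summability_lemma} for summability. You are in fact more careful than the paper, which does not verify the positivity hypothesis $V_k \ge 0$ required by \cref{smct}; your monotonicity-plus-completion-of-squares argument supplies this under $c_\infty\alpha_\infty \ge 1$, a condition the paper only imposes in the \emph{subsequent} convergence theorem but tacitly needs here as well.
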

\begin{proof}
    Apply \cref{smct} and \cref{summability_lemma} with \[W^{k} = 2A_{k}\alpha_{k}R\Big{(} Var(z^{k}) + \frac{\mathbb{E}[Var(z^{k+1/2})|\bar{z}^{k},z^{k}])}{1 - \beta_{k}} \Big{)}, P^{k} = V_{k}, J^{k} = 0.\]
\end{proof}

Equipped with these results, we may now state the convergence of stochastic moving anchor methods.

\begin{theorem}[Stochastic Moving Anchor EAG-V Convergence]
    Consider the stochastic moving anchor EAG-V algorithm \eqref{stoch eagv update1}, \eqref{stoch eagv update2}, \eqref{stoch eagv update3} along with conditions \eqref{condition1}, \eqref{condition2}, \eqref{condition3}, the choice $\beta_{k} = \frac{1}{k+2}$ made in \cref{Lpnv_RMK}, and \cref{summability_condition} for the Lyapunov function \eqref{stoch_fxnal}.
    If $c_{\infty} \geq \frac{1}{\alpha_{\infty}}$, then the stochastic moving anchor EAG-V algorithm converges with rate 
    \[
        \|G(z^{k})\|^{2} \leq \frac{4\Big{[} (\alpha_{0}R^{2} + c_{0})\|z^{0} - z^{*}\|^{2} + \text{sum}(k-1) \Big{]}}{\alpha_{\infty}(k+1)(k+2)},
    \]
    where $\displaystyle \text{sum}(k-1) := \sum_{j=0}^{k-1} 2A_{j}\alpha_{j}R\big{(}Var(z^{j}) + \frac{\mathbb{E}[Var(z^{j+1/2})|\bar{z}^{j},z^{j}]}{1-\beta_{j}}\big{)}$.
\end{theorem}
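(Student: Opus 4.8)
The strategy, mirroring the deterministic argument behind \cref{conv_conc_convergence_thm}, is to bracket the Lyapunov functional $V_k$ of \cref{stochastic_descent_lemma}: from below by a multiple of $\|G(z^k)\|^2$ whose coefficient is of order $(k+1)(k+2)$, and from above by $V_0$ plus the accumulated variance budget. Throughout I use that $z^*$ is a saddle point, so $G(z^*)=0$, and the normalizations $\beta_k=\tfrac1{k+2}$, $B_k=k+1$, $A_k=\tfrac{\alpha_k(k+1)(k+2)}2$ from \cref{Lpnv_RMK}.

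\emph{Lower bound on $V_k$.} Monotonicity of $G$ gives $\langle G(z^k),\,z^k-z^*\rangle\ge0$, hence $\langle G(z^k),\,z^k-\bar z^k\rangle=\langle G(z^k),\,z^k-z^*\rangle+\langle G(z^k),\,z^*-\bar z^k\rangle\ge-\|G(z^k)\|\,\|z^*-\bar z^k\|$ by Cauchy--Schwarz. Writing $g=\|G(z^k)\|$ and $t=\|z^*-\bar z^k\|\ge0$, this yields $V_k\ge c_kt^2-B_k\,g\,t+A_k\,g^2\ge\big(A_k-\tfrac{B_k^2}{4c_k}\big)g^2$ after minimizing the quadratic in $t$ (the minimizer $t=B_kg/(2c_k)$ lies in $t\ge0$, so the unconstrained minimum is attained). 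I would then verify $A_k-\tfrac{B_k^2}{4c_k}\ge\tfrac{\alpha_\infty(k+1)(k+2)}4$; dividing by $k+1$ and using $\alpha_k\ge\alpha_\infty$, this reduces to $c_k\alpha_\infty\ge\tfrac{k+1}{k+2}$, which holds because $\{c_k\}$ decreases to $c_\infty$, $\{\alpha_k\}$ decreases to $\alpha_\infty$ by \cref{funny_alpha_lemma}, and $c_\infty\alpha_\infty\ge1$ by hypothesis. Thus $V_k\ge\tfrac{\alpha_\infty(k+1)(k+2)}4\|G(z^k)\|^2$ for every realization.

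\emph{Upper bound on $V_k$.} Rearranging \cref{stochastic_descent_lemma} gives $\mathbb E[V_{k+1}\mid\bar z^k,z^k]\le V_k+2A_k\alpha_kR\big(Var(z^k)+\tfrac1{1-\beta_k}\mathbb E[Var(z^{k+1/2})\mid\bar z^k,z^k]\big)$. Taking total expectation and telescoping over $j=0,\dots,k-1$ produces $\mathbb E[V_k]\le V_0+\mathbb E[\text{sum}(k-1)]$. Since $z^0=\bar z^0$ the cross term in $V_0$ vanishes and $A_0=\alpha_0$, so $V_0=\alpha_0\|G(z^0)\|^2+c_0\|z^0-z^*\|^2\le(\alpha_0R^2+c_0)\|z^0-z^*\|^2$ using the $R$-Lipschitz property of $G$ together with $G(z^*)=0$.

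\emph{Conclusion.} Combining the two bounds, $\mathbb E[\|G(z^k)\|^2]\le\tfrac{4\,\mathbb E[V_k]}{\alpha_\infty(k+1)(k+2)}\le\tfrac{4\big[(\alpha_0R^2+c_0)\|z^0-z^*\|^2+\mathbb E[\text{sum}(k-1)]\big]}{\alpha_\infty(k+1)(k+2)}$, which is the stated rate (with $\text{sum}(k-1)$ understood under expectation; the almost-sure convergence of the error and the finiteness of the variance series are the separate content of \cref{smct} and \cref{summability_lemma}). I expect the lower-bound step to be the delicate one: it is precisely there that the hypothesis $c_\infty\alpha_\infty\ge1$ is consumed, and one must use the monotonicity of both $\{\alpha_k\}$ and $\{c_k\}$ in the correct direction to obtain a coefficient that is uniformly of the right size in $k$.
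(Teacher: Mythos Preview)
Your argument is correct and follows essentially the same two-sided bracketing of $V_k$ as the paper: monotonicity plus a quadratic estimate below, and telescoping the descent inequality above with the Lipschitz bound on $V_0$. Two minor remarks: the paper applies Young's inequality in the form $V_k\ge\tfrac{A_k}{2}\|G(z^k)\|^2+(c_k-\tfrac{B_k^2}{2A_k})\|z^*-\bar z^k\|^2$ rather than minimizing over $t$, which is a cosmetic variant leading to the same coefficient $\tfrac{\alpha_\infty(k+1)(k+2)}4$; and you are in fact more careful than the paper in passing to total expectation before telescoping---the paper writes the telescoped bound pathwise even though \eqref{difference_bound} is only a conditional-expectation inequality, so your version (with $\mathbb E[\cdot]$ on both $\|G(z^k)\|^2$ and $\text{sum}(k-1)$) is the rigorous reading of the stated rate.
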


\begin{proof}
    By \eqref{difference_bound}, we see that
    \begin{align}
        V_{k} &\leq V_{k-1} + 2A_{k-1}\alpha_{k-1}RVar(z^{k-1}) + \frac{2A_{k-1}}{1 - \beta_{k-1}}\alpha_{k-1}R\mathbb{E}[Var(z^{(k-1)+1/2})|\bar{z}^{k-1},z^{k-1}] \notag \\
        &\leq V_{0} + \text{sum}(k-1) \notag \\
        &= \alpha_{0}\|G(z^{0})\|^{2} + c_{0}\|z^{0} - z_{*}\|^{2} + \text{sum}(k-1) \notag \\
        &\leq (\alpha_{0}R + c_{0})\|z^{0} - z_{*}\|^{2} + \text{sum}(k-1). \notag
    \end{align}
    Going in the opposite direction, we see that
    \begin{align}
        \notag V_{k} &= A_{k}\|G(z^{k})\|^{2} + B_{k}\langle G(z^{k}),z^{k}-\bar{z}^{k}\rangle + c_{k}\|z^{*}-\bar{z}^{k}\|^{2}\\
        \notag &\geq A_{k}\|G(z^{k})\|^{2} + B_{k}\langle G(z^{k}),z^{*}-\bar{z}^{k}\rangle + c_{k}\|z^{*}-\bar{z}^{k}\|^{2} \text{ (monotonicity of $G$)}\\
        \notag &\geq \frac{A_{k}}{2}\|G(z^{k})\|^{2} + (c_{k} - \frac{B_{k}^{2}}{2A_{k}})\|z^{*}-\bar{z}^{k}\|^{2} \text{ (Young's inequality)}\\
        \notag &=\frac{\alpha_{k}(k+1)(k+2)}{4}\|G(z^{k})\|^{2} + (c_{k} - \frac{k+1}{\alpha_{k}(k+2)})\|z^{*}-\bar{z}^{k}\|^{2}\\
        \notag &\geq \frac{\alpha_{\infty}}{4}(k+1)(k+2)\|G(z^{k})\|^{2} + (c_{\infty} - \frac{1}{\alpha_{\infty}})\|z^{*}-\bar{z}^{k}\|^{2}\\
        \notag &\geq \frac{\alpha_{\infty}}{4}(k+1)(k+2)\|G(z^{k})\|^{2} \text{ ($c_{\infty}$ dominates $\frac{1}{\alpha_{\infty}}$)}
    \end{align}
    As long as $c_{\infty} \geq \frac{1}{\alpha_{\infty}},$ the second to last line above is positive, and we may focus on the inequality given to us by the last line above:
    \[
        \frac{\alpha_{\infty}}{4}(k+1)(k+2)\|G(z^{k})\|^{2} \leq (\alpha_{0}R^{2} + c_{0})\|z^{0}-z^{*}\|^{2} + sum(k-1).
    \]
    Finally, one divides both sides by the constant $\frac{\alpha_{\infty}}{4}(k+1)(k+2)$ to achieve the result.
\end{proof}

\section{Popov's Scheme with Moving Anchor} \label{sec: MAP_section}

In this section, we introduce an anchored Popov's scheme modified by the introduction of a moving anchor, an innovation first introduced in \cite{alcala2023moving} to improve numerical convergence results in \cite{yoon_ryu}, \cite{lee2021fast} and expand the scope of the extragradient-anchor theory.
This is directly inspired by the results in \cite{tran2021halpern}, and may also be considered an accelerated variant of the reflected gradient method \cite{Malitsky_2015}.

Before we proceed, let us first redirect our attention to the (fixed) anchored Popov scheme \cref{anch_pop_1}, \cref{anch_pop_2}.
In particular, note that $\hat{z}^{k}$ has two important roles: (1) it provides the direction of descent/ascent to obtain the next iterate $z^{k+1}$, and (2) it provides the direction of descent/ascent for the next extrapolation, $\hat{z}^{k+1}$.
This subtle distinction matters, because in the moving anchor setups \cite{alcala2023moving}, the descent directions are simply computed sequentially: whatever point you most recently computed is going to be used in your next computation, including the moving anchor step.
Now that that directions are \textit{only} computed via the extrapolators $\hat{z}^{k}$, we must consider the fact that the moving anchor was computed with a descent/ascent step whose direction came from the \textit{most recently computed iterate}, $z^{k+1}$.
In the algorithms developed and studied in \cite{alcala2023moving}, $z^{k+1}$ is then used to compute the next extrapolation point and the next iterate.
By construction, this point cannot be used as a descent direction for the primary steps in any Popov's scheme variant with a moving anchor.

With this discussion, there are two choices for the moving anchor descent direction in a Popov's scheme with moving anchor:
\begin{enumerate}
    \item $z^{k+1}$ \label{choice_1}
    \item $\hat{z}^{k}$. \label{choice_2}
\end{enumerate}
The advantage to \cref{choice_1} is that it follows the practice established in \cite{alcala2023moving} of using the most recently computed iterate, and can therefore potentially take advantage of the convergence theory therein established. 
On the other hand, \cref{choice_2} may also be considered a natural choice, as it follows the Popov's scheme practice of only computing descent directions from extrapolators and benefits the analysis in only needing to worry about handling gradients on such points.

In light of this discussion and the numerical results in \cref{sec: num_exp}, we offer two Popov's schemes with moving anchors.

Popov's scheme with Moving Anchor Version 1: Last Iterate Direction

$\begin{cases}
    \hat{z}^{k} &= \beta_{k}\bar{z}^{k} + (1 - \beta_{k})z^{k} - \eta_{k}G(\hat{z}^{k-1}) \\
    z^{k+1} &= \beta_{k}\bar{z}^{k} + (1 - \beta_{k})z^{k} - \eta_{k}G(\hat{z}^{k}) \label{version_1}\\
    \bar{z}^{k+1} &= \bar{z}^{k} \pm \gamma_{k+1}G(z^{k+1})
\end{cases}$

Popov's scheme with Moving Anchor Version 2: Extrapolator Direction

$\begin{cases}
    \hat{z}^{k} &= \beta_{k}\bar{z}^{k} + (1 - \beta_{k})z^{k} - \eta_{k}G(\hat{z}^{k-1}) \\
    z^{k+1} &= \beta_{k}\bar{z}^{k} + (1 - \beta_{k})z^{k} - \eta_{k}G(\hat{z}^{k}) \label{version_2}\\
    \bar{z}^{k+1} &= \bar{z}^{k} \pm \gamma_{k+1}G(\hat{z}^{k})
\end{cases}$

\subsection{Discussion}

Preliminary attempts at analyzing Version 1 with a moving anchor structure otherwise unchanged from that in \cite{alcala2023moving} suggest that there are no issues in obtaining a convergence theorem similar to the one found in \cite{tran2021halpern} with a fixed anchor structure, but there are still some issues of absolving certain terms when it comes to constructing a Lyapunov descent lemma.

On the other hand, numerical results and the existing Popov schemes handling descent directions via extrapolation points indicate that a convergence analysis on Version 2 may also be fruitful; we leave the completion of this analysis and the exploration of other anchor step descent directions, such as a convex combination of $z^{k+1}$ and $\hat{z}^{k}$, as future work.

While the convergence theory here is incomplete, our tentative numerical results in \cref{sec: num_exp} indicate that such a theory is plausible and may include both Version 1 and Version 2 of the Popov's scheme with moving anchor.

\section{Numerical Experiments} \label{sec: num_exp}

\subsection{Stochastic Experiments}

Here we detail multiple numerical experiments that showcase the stochastic moving anchor. The choices for some initialization constants must differ significantly from their deterministic counterparts, in particular:

\begin{align*}
    \alpha_{0} &= 0.9\cdot(3/4)\cdot(1/R) \cdot (1 / \sqrt{K_{G}})\\
    c_{0} &= (1.01)\cdot (4/3) \cdot e^{\frac{\pi^{2}}{6}} \cdot R\sqrt{K_{G}}
\end{align*}

\begin{figure}[h!]
	\centering
	\includegraphics[scale=.5]{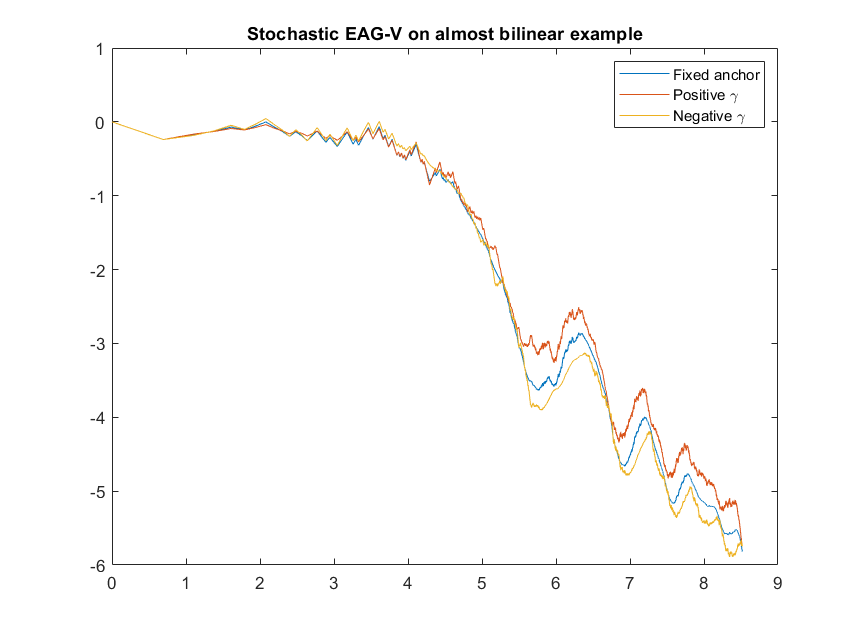}
	\caption{Comparison of the grad-norm squared of three stochastic EAG-V variants of interest on a toy `almost bilinear' problem.}
	\label{fig:grad_norms_sqrd_stoch_eagv}
\end{figure}

In \cref{fig:grad_norms_sqrd_stoch_eagv}, one observes that, on a toy example, the behavior of the $-\gamma_{k}$ and $+\gamma_{k}$ variants of the moving anchor seem to parallel the deterministic setting of the same problem \cite{alcala2023moving}, in that the negative version is the fastest and the positive version is the slowest.


\begin{figure}[h!]
	\centering
	\includegraphics[scale=.5]{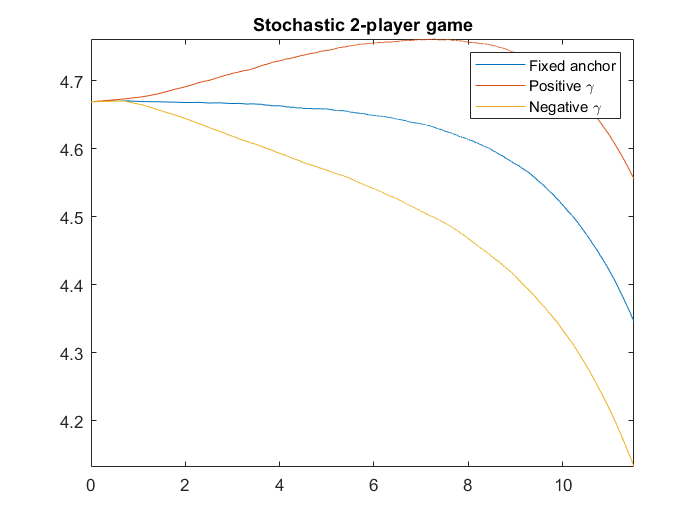}
	\caption{Comparison of the grad-norm squared of three stochastic EAG-V anchoring variants on a nonlinear game.}
    \label{fig:stoch_game}
\end{figure}

In \cref{fig:stoch_game}, a 2 player nonlinear game of the following form was studied using the stochastic moving anchor EAG-V algorithms:

\[\min_{x \in \Delta^{n}} \max_{y \in \Delta^{m}} \frac{1}{2}\langle Qx,x\rangle + \langle Kx,y \rangle \]
where $Q = A^{T}A$ is positive semidefinite for $A \in \mathbb{R}^{k \times n}$ which has entries generated independently from the standard normal distribution, $K \in \mathbb{R}^{m \times n}$ with entries generated uniformly and independently from the interval $[-1,1],$ and $\Delta^{n}, \Delta^{m}$ are the $n-$ and $m-$simplices, respectively:
\[\Delta^{n} := \Big{\{} x\in\mathbb{R}_{+}^{n}: \sum_{i=1}^{n}x_{i} = 1 \Big{\}}, \; \Delta^{m} := \Big{\{} y\in\mathbb{R}_{+}^{m}: \sum_{j=1}^{m}y_{j} = 1 \Big{\}}.\]
One may interpret this as a two person game where player one has $n$ strategies to choose from, choosing strategy $i$ with probability $x_{i} \; (i = 1, . . . , n)$ to attempt to minimize a loss, while the second player attempt to maximize their gain among $m$ strategies with strategy $j$ chosen with probability $y_{j} \; (j = 1, \ldots , m).$
The payoff is a quadratic function that depends on the strategy of both players.
This was implemented following the 3 operator splitting scheme in \cite{davis2017three}, with parameter $\lambda = 0.01$.
We set $m = 2$ and $n=48,$ for a more moderately sized problem with a well-behaved condition number.

A few remarks are in order.
This game was previously studied in both \cite{chen2014optimal} and \cite{alcala2023moving}, where in the latter case the authors encountered favorable results using deterministic moving anchor algorithms.
However, in both high and low dimensional examples, the choice of positive $\gamma_{k}$ resulted in the most significant acceleration beating the fixed anchor in the deterministic case.
Here in our stochastic variant, we encounter the opposite behavior: a moving anchor variant indeed provides the most significant acceleration, but it is the negative $\gamma_{k}$ that does so.
One possibility is that the EAG-V algorithm structure somehow favors the negative $\gamma_{k}$ variant of moving anchor algorithms, while the FEG algorithm structure - whether on a convex-concave problem or a nonconvex-nonconcave problem - favors the positive $\gamma_{k}$.
Secondly, although our theory calls for setting $K_{G}$ to be the operator's condition number times the dimensions in the domain of the objective function, setting $K_{G} = 1$ resulted in significant numerical improvements, especially as even with $m = 2, n = 48,$ the condition number in this type of problem can be very large.
Finally, the choice of the parameter $\lambda$ relating to the three operator splitting structure differs a large amount from that used in \cite{alcala2023moving}, and it is unclear why this numerical discrepancy exists.

\subsection{Anchored Popov's Scheme}

\begin{figure}[h!]
	\centering
	\includegraphics[scale=.5]{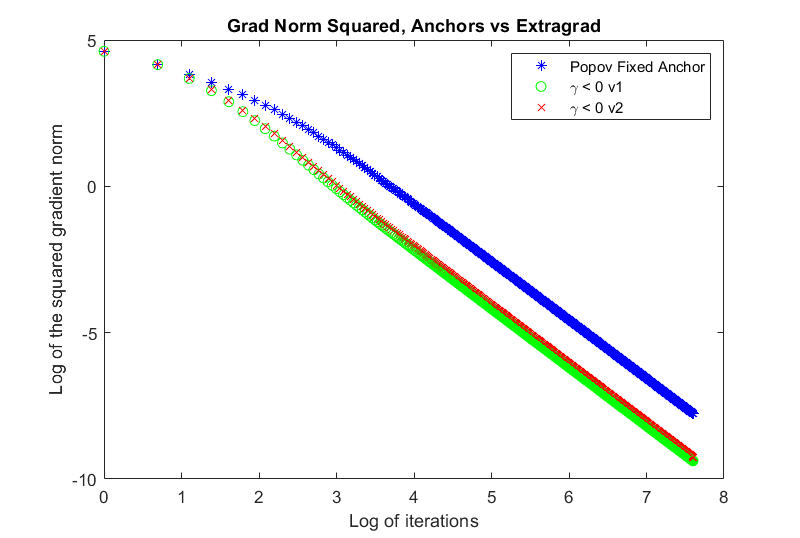}
	\caption{Comparison of the fixed anchor Popov scheme versus two moving anchor variants with a descending anchor.}
    \label{fig:popov_example}
\end{figure}

\cref{fig:popov_example} shows the original anchored Popov's scheme \cite{tran2021halpern} compared to two moving anchor Popov's schemes, see \cref{sec: MAP_section} for details on the algorithm construction.
The problem studied is the toy `almost bilinear' problem $f(x,y) = \epsilon \frac{\|x\|^{2}}{2} + \langle x,y \rangle - \epsilon \frac{\|y\|^{2}}{2}$ in one dimension, this time with $\epsilon$ tuned up to $10^{1}$.
As in \cite{alcala2023moving}, we see that two moving anchor variants are faster than the fixed anchor variant, this time in the anchored Popov's scheme setup.
It also appears that this setup mirrors that of the moving anchor EAG-V studied in \cite{alcala2023moving}, as we see that the negative $\gamma_{k}$ variants of the moving anchor algorithm are the fastest by a constant.
Furthermore, both version 1 and version 2 of the moving anchor Popov schemes with a negative $\gamma_{k}$ are faster by a constant, implying that not only is there an uncovered convergence theory for the moving anchor in this setup, but that it may somehow incorporate both versions 1 and 2 studied in this example.
We leave more numerical examples as well as a fully rigorous convergence theory for future work.

\section{Conclusion}

In this work we introduce stochasticity to the previously developed moving anchor methods \cite{alcala2023moving} with both rigorous theory and numerical results demonstrating their efficacy. A Lyapunov analysis enables one to demonstrate order-optimal convergence results with no hindrance to the strength of the algorithms in computational examples. In addition, a moving anchor Popov's scheme is developed following \cite{tran2021halpern} that, while currently lacking rigorous convergence theory, shows promise with numerical results hinting at a convergence theory that we hope to uncover soon. In particular, because of the differences in how Popov's scheme and previous moving anchor methods compute descent directions for their anchors, there may be a convergence theory that allows for a wide range of different descent directions, which was previously unheard of in both fixed and moving anchor methods. 
More computational examples and a convergence theory for the moving anchor Popov's scheme are warranted.

\bibliographystyle{amsplain}
\bibliography{bibliography}

\providecommand{\bysame}{\leavevmode\hbox to3em{\hrulefill}\thinspace}
\providecommand{\MR}{\relax\ifhmode\unskip\space\fi MR }
\providecommand{\MRhref}[2]{%
  \href{http://www.ams.org/mathscinet-getitem?mr=#1}{#2}
}
\providecommand{\href}[2]{#2}
\begin{thebibliography}{10}

\bibitem{alcala2023moving}
James~K Alcala, Yat~Tin Chow, and Mahesh Sunkula, \emph{Moving anchor extragradient methods for smooth structured minimax problems}, arXiv preprint arXiv:2308.12359 (2023).

\bibitem{azizian2020tight}
Wa{\"\i}ss Azizian, Ioannis Mitliagkas, Simon Lacoste-Julien, and Gauthier Gidel, \emph{A tight and unified analysis of gradient-based methods for a whole spectrum of differentiable games}, International conference on artificial intelligence and statistics, PMLR, 2020, pp.~2863--2873.

\bibitem{bassily2023differentially}
Raef Bassily, Crist{\'o}bal Guzm{\'a}n, and Michael Menart, \emph{Differentially private algorithms for the stochastic saddle point problem with optimal rates for the strong gap}, The Thirty Sixth Annual Conference on Learning Theory, PMLR, 2023, pp.~2482--2508.

\bibitem{cai2022tightlastiterateconvergenceextragradient}
Yang Cai, Argyris Oikonomou, and Weiqiang Zheng, \emph{Tight last-iterate convergence of the extragradient and the optimistic gradient descent-ascent algorithm for constrained monotone variational inequalities}, 2022.

\bibitem{chavdarova2019reducing}
Tatjana Chavdarova, Gauthier Gidel, Fran{\c{c}}ois Fleuret, and Simon Lacoste-Julien, \emph{Reducing noise in gan training with variance reduced extragradient}, Advances in Neural Information Processing Systems \textbf{32} (2019).

\bibitem{chen2014optimal}
Yunmei Chen, Guanghui Lan, and Yuyuan Ouyang, \emph{Optimal primal-dual methods for a class of saddle point problems}, SIAM Journal on Optimization \textbf{24} (2014), no.~4, 1779--1814.

\bibitem{combettes2015stochastic}
Patrick~L Combettes and Jean-Christophe Pesquet, \emph{Stochastic quasi-fej{\'e}r block-coordinate fixed point iterations with random sweeping}, SIAM Journal on Optimization \textbf{25} (2015), no.~2, 1221--1248.

\bibitem{davis2017three}
Damek Davis and Wotao Yin, \emph{A three-operator splitting scheme and its optimization applications}, Set-valued and variational analysis \textbf{25} (2017), 829--858.

\bibitem{diakonikolas2020halpern}
Jelena Diakonikolas, \emph{Halpern iteration for near-optimal and parameter-free monotone inclusion and strong solutions to variational inequalities}, Conference on Learning Theory, PMLR, 2020, pp.~1428--1451.

\bibitem{du2017stochastic}
Simon~S Du, Jianshu Chen, Lihong Li, Lin Xiao, and Dengyong Zhou, \emph{Stochastic variance reduction methods for policy evaluation}, International Conference on Machine Learning, PMLR, 2017, pp.~1049--1058.

\bibitem{golowich}
Noah Golowich, Sarath Pattathil, Constantinos Daskalakis, and Asuman Ozdaglar, \emph{Last iterate is slower than averaged iterate in smooth convex-concave saddle point problems}, 2020.

\bibitem{gonzalez2024mirror}
Tom{\'a}s Gonz{\'a}lez, Crist{\'o}bal Guzm{\'a}n, and Courtney Paquette, \emph{Mirror descent algorithms with nearly dimension-independent rates for differentially-private stochastic saddle-point problems}, arXiv preprint arXiv:2403.02912 (2024).

\bibitem{goodfellow2014generative}
Ian~J Goodfellow, Mehdi Mirza, Bing Xu, David Warde-Farley, Sherjil Ozair, Aaron Courville, Yoshua Bengio, and Jean Pouget-Abadie, \emph{Generative adversarial nets}, Advances in neural information processing systems \textbf{27} (2014), 2672--2680.

\bibitem{gorbunov2022stochastic}
Eduard Gorbunov, Hugo Berard, Gauthier Gidel, and Nicolas Loizou, \emph{Stochastic extragradient: General analysis and improved rates}, International Conference on Artificial Intelligence and Statistics, PMLR, 2022, pp.~7865--7901.

\bibitem{gorbunov2023convergence}
Eduard Gorbunov, Adrien Taylor, Samuel Horv{\'a}th, and Gauthier Gidel, \emph{Convergence of proximal point and extragradient-based methods beyond monotonicity: the case of negative comonotonicity}, International Conference on Machine Learning, PMLR, 2023, pp.~11614--11641.

\bibitem{halpern1967fixed}
Benjamin Halpern, \emph{Fixed points of nonexpanding maps}, Bulletin of the American Mathematical Society \textbf{73} (1967), no.~6, 957--961.

\bibitem{hsieh2019}
Yu-Guan Hsieh, Franck Iutzeler, J\'{e}r\^{o}me Malick, and Panayotis Mertikopoulos, \emph{On the convergence of single-call stochastic extra-gradient methods}, Advances in Neural Information Processing Systems (H.~Wallach, H.~Larochelle, A.~Beygelzimer, F.~d\textquotesingle Alch\'{e}-Buc, E.~Fox, and R.~Garnett, eds.), vol.~32, Curran Associates, Inc., 2019.

\bibitem{korpelevich1976extragradient}
Galina~M Korpelevich, \emph{The extragradient method for finding saddle points and other problems}, Matecon \textbf{12} (1976), 747--756.

\bibitem{lee2023accelerating}
Jongmin Lee and Ernest~K. Ryu, \emph{Accelerating value iteration with anchoring}, Thirty-seventh Conference on Neural Information Processing Systems, 2023.

\bibitem{lee2021fast}
Sucheol Lee and Donghwan Kim, \emph{Fast extra gradient methods for smooth structured nonconvex-nonconcave minimax problems},  (2021).

\bibitem{lee2021semi}
Sucheol Lee and Donghwan Kim, \emph{Semi-anchored multi-step gradient descent ascent method for structured nonconvex-nonconcave composite minimax problems}, arXiv preprint arXiv:2105.15042 (2021).

\bibitem{li2022convergence}
Chris~Junchi Li, Yaodong Yu, Nicolas Loizou, Gauthier Gidel, Yi~Ma, Nicolas Le~Roux, and Michael Jordan, \emph{On the convergence of stochastic extragradient for bilinear games using restarted iteration averaging}, International Conference on Artificial Intelligence and Statistics, PMLR, 2022, pp.~9793--9826.

\bibitem{lieder2021convergence}
Felix Lieder, \emph{On the convergence rate of the halpern-iteration}, Optimization letters \textbf{15} (2021), no.~2, 405--418.

\bibitem{liu2019towards}
Mingrui Liu, Youssef Mroueh, Jerret Ross, Wei Zhang, Xiaodong Cui, Payel Das, and Tianbao Yang, \emph{Towards better understanding of adaptive gradient algorithms in generative adversarial nets}, arXiv preprint arXiv:1912.11940 (2019).

\bibitem{madry2017towards}
Aleksander Madry, Aleksandar Makelov, Ludwig Schmidt, Dimitris Tsipras, and Adrian Vladu, \emph{Towards deep learning models resistant to adversarial attacks}, arXiv preprint arXiv:1706.06083 (2017).

\bibitem{Malitsky_2015}
Yu. Malitsky, \emph{Projected reflected gradient methods for monotone variational inequalities}, SIAM Journal on Optimization \textbf{25} (2015), no.~1, 502–520.

\bibitem{nesterov1983method_AGM}
Yurii Nesterov, \emph{A method for unconstrained convex minimization problem with the rate of convergence o (1/k\^{} 2)}, Doklady an ussr, vol. 269, 1983, pp.~543--547.

\bibitem{nurbekyan2024monotoneinclusionmethodsclass}
Levon Nurbekyan, Siting Liu, and Yat~Tin Chow, \emph{Monotone inclusion methods for a class of second-order non-potential mean-field games}, 2024.

\bibitem{popov1980modification}
Leonid~Denisovich Popov, \emph{A modification of the arrow-hurwicz method for search of saddle points}, Mathematical notes of the Academy of Sciences of the USSR \textbf{28} (1980), 845--848.

\bibitem{ryu2019ode}
Ernest~K Ryu, Kun Yuan, and Wotao Yin, \emph{Ode analysis of stochastic gradient methods with optimism and anchoring for minimax problems and gans},  (2019).

\bibitem{suh2023continuoustime}
Jaewook~J. Suh, Jisun Park, and Ernest~K. Ryu, \emph{Continuous-time analysis of anchor acceleration}, Thirty-seventh Conference on Neural Information Processing Systems, 2023.

\bibitem{tran2022connection}
Quoc Tran-Dinh, \emph{The connection between nesterov's accelerated methods and halpern fixed-point iterations}, arXiv preprint arXiv:2203.04869 (2022).

\bibitem{tran2021halpern}
Quoc Tran-Dinh and Yang Luo, \emph{Halpern-type accelerated and splitting algorithms for monotone inclusions}, arXiv preprint arXiv:2110.08150 (2021).

\bibitem{wood2023stochastic}
Killian Wood and Emiliano Dall’Anese, \emph{Stochastic saddle point problems with decision-dependent distributions}, SIAM Journal on Optimization \textbf{33} (2023), no.~3, 1943--1967.

\bibitem{yoon2024notunique}
Taeho Yoon, Jaeyeon Kim, Jaewook~J. Suh, and Ernest~K. Ryu, \emph{Optimal acceleration for minimax and fixed-point problems is not unique}, Proceedings of the 41st International Conference on Machine Learning (Ruslan Salakhutdinov, Zico Kolter, Katherine Heller, Adrian Weller, Nuria Oliver, Jonathan Scarlett, and Felix Berkenkamp, eds.), Proceedings of Machine Learning Research, vol. 235, PMLR, 21--27 Jul 2024, pp.~57244--57314.

\bibitem{yoon_ryu}
Taeho Yoon and Ernest~K Ryu, \emph{Accelerated algorithms for smooth convex-concave minimax problems with $o(1/k^2)$ rate on squared gradient norm}, Proceedings of the 38th International Conference on Machine Learning \textbf{139} (2021), 12098--12109.

\bibitem{yoon2025flock}
TaeHo Yoon and Ernest~K. Ryu, \emph{Accelerated minimax algorithms flock together}, SIAM Journal on Optimization \textbf{35} (2025), no.~1, 180--209.

\bibitem{yuan2023optimal}
Angela Yuan, Chris~Junchi Li, Gauthier Gidel, Michael Jordan, Quanquan Gu, and Simon~S Du, \emph{Optimal extragradient-based algorithms for stochastic variational inequalities with separable structure}, Advances in Neural Information Processing Systems \textbf{36} (2023), 33338--33351.

\end{thebibliography}

\end{document}